\documentclass[a4paper,11pt]{amsart}

\usepackage{amsmath}
\usepackage{amsfonts}
\usepackage{amssymb}
\usepackage{latexsym}

\usepackage[T1]{fontenc}
\usepackage[utf8]{inputenc}
\usepackage[
textwidth=15cm,
textheight=24cm,
hmarginratio=1:1,
vmarginratio=1:1]{geometry}

\usepackage{amssymb}
%\usepackage{tikz}
%\usetikzlibrary{matrix}
%\usepackage{caption}
%\usepackage{float}

\allowdisplaybreaks
%\raggedbottom
\newcommand{\Cn}{\mathbb{C}^n}

\newcommand{\Sn}{\mathbb{S}^n}

\newcommand{\Bn}{\mathbb{B}^n}
\newcommand{\D}{\mathbb D}

\newcommand{\C}{\mathbb C}

\newcommand{\B}{\mathbb B}
\newcommand{\Sp}{\mathbb S}
\newcommand{\BMOA}{\mathit{BMOA}}
\newcommand{\VMOA}{\mathit{VMOA}}

\newtheorem{theorem}{Theorem}
\newtheorem{lemma}[theorem]{Lemma}
\newtheorem{prop}[theorem]{Proposition}

\theoremstyle{remark}

\newtheorem{remark}[theorem]{Remark}

\numberwithin{equation}{section}
\numberwithin{theorem}{section}

\title[Rigidity of Volterra-type integral operators on Hardy spaces]{Rigidity of Volterra-type integral operators on  Hardy spaces of the unit ball}

\author[Santeri Miihkinen]{Santeri Miihkinen}
\address{Santeri Miihkinen \\Department of Mathematics \\
\AA bo Akademi University\\
FI-20500 \AA bo\\
Finland} \email{santeri.miihkinen@abo.fi}

\author[Jordi Pau]{Jordi Pau}
\address{Jordi Pau \\Departament de Matem\`{a}tiques i Inform\`{a}tica \\
Universitat de Barcelona\\
08007 Barcelona\\
Catalonia, Spain} \email{jordi.pau@ub.edu}

\author[Antti Per\"{a}l\"{a}]{Antti Per\"{a}l\"{a}}
\address{Antti Per\"{a}l\"{a} \\Department of Mathematical Sciences \\
 Chalmers University of Technology and the University of Gothenburg \\
Gothenburg SE-412 96 \\
Sweden} \email{perala.math@gmail.com}

\author[Maofa Wang]{Maofa Wang}
\address{Maofa Wang \\School of Mathematics and Statistics, Wuhan University, Wuhan
430072 \\  China} \email{mfwang.math@whu.edu.cn}

%\date{\today}
\subjclass[2010]{32A35, 47B38}

\keywords{Integration operator, Hardy space, strictly singular operator, $\ell^p$-singular operator}

\thanks{S. Miihkinen
was supported by the
Academy of Finland project 296718. J. Pau was partially
 supported by  the grants MTM2017-83499-P (Ministerio de Educaci\'{o}n y Ciencia) and 2017SGR358 (Generalitat de Catalunya). A. Per\"al\"a acknowledges financial support from the Spanish Ministry of Economy and Competitiveness, through the Mar\'ia de Maeztu Programme for Units of Excellence in R\&D (MDM-2014-0445).  M. Wang was partially supported by  {NSFC (11771340)}  and China Scholarship Council (201806275003).}

\begin{document}

\begin{abstract}
We
establish that  the  Volterra-type integral operator $J_b$ on the Hardy spaces $H^p$ of the unit ball $\Bn$  exhibits
a rather strong rigid behavior. More precisely, we show that the compactness, strict singularity and $\ell^p$-singularity of $J_b$ are equivalent on $H^p$ for any $1 \le p < \infty$. Moreover,  we show that the operator $J_b$ acting on $H^p$ cannot fix an isomorphic copy of $\ell^2$ when $p \ne 2.$
\end{abstract}

\maketitle

\section{Introduction}

An operator $T\colon X \to Y$ between Banach spaces $X$ and $Y$ is \textit{strictly singular} if its restriction to any infinite-dimensional subspace $M$ of $X$ is not a linear isomorphism onto its range, i.e. the restriction is not bounded below on $M.$ This class of operators forms a two-sided operator ideal and was introduced by T. Kato \cite{Kat} in connection with the perturbation theory of Fredholm operators.  If $T$ is not bounded below on any subspace $M$ isomorphic to the sequence space $\ell^p,$ then $T$ is said to be $\ell^p$-\textit{singular}. These notions generalize the concept of compact operators. Examples of strictly singular non-compact operators are the inclusion mappings $i_{p,q}:\ell^p \hookrightarrow \ell^q$, where $1\le p<q<\infty$. The following inclusions hold: $K(X) \subset S(X) \subset S_p(X),$ where $K(X)$ is the class of compact operators on $X$, $S(X)$ the class of strictly singular operators and $S_p(X)$ the class of $\ell^p-$singular operators on $X.$ In general, these classes are distinct, but coincide e.g.~in the case of $X$ being a Hilbert space, see \cite[Chapter 5]{P1980}.

The purpose of this paper is to study the strict singularity of the Volterra-type integration operator $J_ b$ acting on the Hardy spaces of the unit ball $\Bn$, extending the results previously obtained in \cite{M1,MNST} for the case of the unit disk $\D$. For a holomorphic function $b$ on $\Bn$, the operator $J_ b$ is defined as
\[
 J_ b f(z)=\int_{0}^{1} f(tz) Rb(tz) \frac{dt}{t},\qquad z\in \Bn
\]
for $f$ holomorphic on $\Bn$. Here $Rb$ denotes the radial derivative of $b$, that is,
\[
Rb(z)= \sum_{k=1}^{n} z_ k \frac{\partial b}{\partial z_ k} (z),\qquad z=(z_ 1,\dots,z_ n)\in \Bn.
\]
It is well-known \cite{AC, AS0, Pau2016} that $J_ b$ is bounded on the Hardy space $H^p(\Bn)$ if and only if $b\in BMOA(\Bn)$, the space of all holomorphic functions of bounded mean oscillation; and $J_ b$ is compact on $H^p(\Bn)$ if and only if $b\in VMOA(\Bn)$, the space of holomorphic functions on $\Bn$ of vanishing mean oscillation. For $0<p<\infty$, the Hardy space $H^p:=H^p(\Bn)$ consists of those holomorphic functions $f$ on $\Bn$ with
 \[ \|f\|_{H^p}^p=\sup_{0<r<1}\int_{\Sn} \!\! |f(r\zeta)|^p \,d\sigma(\zeta)<\infty,\]
where $d\sigma$ is the surface measure on the unit sphere $\Sn:=\partial \Bn$ normalized so that $\sigma(\Sn)=1$.  The mentioned operator $J_ b$ became extremely popular in recent years, being studied in many spaces of holomorphic functions (see \cite{Aleman06, MPPW, Pau2016, Sis} and the references therein). As far as we know, the generalization of the operator $J_ b$ acting on holomorphic function spaces of the unit ball of $\Cn$, as defined here, was introduced by Z. Hu \cite{Hu}. A fundamental property of the Volterra integration operator $J_ b$ is the basic identity
\[
R(J_ b f)(z)=f(z)\,Rb(z),\quad z\in \Bn.
\]
The existence of non-compact strictly singular operators acting on the Hardy space $H^p(\D)$ for $p \ne 2$  can be seen by considering the inclusion mappings between the sequence spaces $\ell^p, \, p \ne 2$, and $\ell^2$ and utilizing the fact that $H^p(\D)$ contains  complemented copies of $\ell^p$ and $\ell^2.$ The existence of such operators can be  transferred to the case of the Hardy spaces $H^p(\B^n), \, 1 \le p < \infty,$ since they are all isomorphic to $H^p(\D)$ by the result of Wojtaszczyk \cite{Woj1983}.

We recall that, for a Banach space $X$, a bounded linear operator $T \colon X \rightarrow X$  is said to fix a copy of a given Banach space $E$, if there is a closed subspace $M\subset X$, linearly isomorphic to $E$, and $c > 0$ so that $\|Tx\| \ge c\|x\|$ for all $x \in M$ (that is, the restriction $T_{|M}$ defines an isomorphism onto its range). Our first result, proved in the case of the unit disk in \cite{M1}, shows that $J_b$ is compact on $H^p$ if and only if it is strictly singular; as when it is not compact it fixes an isomorphic copy of $\ell^p$ inside $H^p$.

\begin{theorem}
\label{T_gstrictsing}
Let $b \in BMOA(\Bn) \setminus VMOA(\Bn)$ and $1 \le p < \infty.$ Then the operator $J_b \colon H^p \to H^p$ fixes an isomorphic copy of $\ell^p$ inside $H^p.$
\end{theorem}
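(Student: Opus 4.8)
The plan is to exploit the Carleson-measure reformulation of the $\BMOA$/$\VMOA$ dichotomy. Since $b\in\BMOA(\Bn)\setminus\VMOA(\Bn)$, the positive measure $d\mu_b(z)=|Rb(z)|^2(1-|z|^2)\,dV(z)$ is a Carleson measure for $H^2$ but fails to be a vanishing Carleson measure; hence there are $\delta>0$ and points $a_k\in\Bn$ with $|a_k|\to1$ whose Carleson boxes $S(a_k)$ obey $\mu_b(S(a_k))\ge\delta\,(1-|a_k|^2)^n$. A standard iteration of the global Carleson bound lets me concentrate the ``bad mass'' in the top shell of each box, i.e. in a region $W_k$ comparable to a Bergman ball of a fixed radius around $a_k$, so that still $\mu_b(W_k)\ge\delta'\,(1-|a_k|^2)^n$; passing to a subsequence I may in addition assume that $\{a_k\}$ is a thin sequence (separated in the Bergman metric, with $1-|a_{k+1}|\le\varepsilon_k(1-|a_k|)$ and $\varepsilon_k\to0$), so that the boundary windows $Q_k\subset\Sn$ below the $W_k$ are pairwise disjoint, $\sigma(Q_k)\asymp(1-|a_k|^2)^n$, and $W_k\subset\Gamma(\zeta)$ for every $\zeta\in Q_k$, $\Gamma(\zeta)$ denoting a Kor\'anyi approach region of sufficiently large aperture.

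As test functions I take the normalized kernels $f_k(z)=\bigl((1-|a_k|^2)\,(1-\langle z,a_k\rangle)^{-2}\bigr)^{n/p}$, for which $\|f_k\|_{H^p}\asymp1$ and $|f_k(z)|\asymp(1-|a_k|^2)^{-n/p}$ on $W_k$. For a thin sequence $\{a_k\}$ the closed span of $\{f_k\}$ in $H^p$ is isomorphic to $\ell^p$, with $\bigl\|\sum_k c_k f_k\bigr\|_{H^p}\asymp\bigl(\sum_k|c_k|^p\bigr)^{1/p}$; this is the atomic-decomposition picture of $H^p$ (for $p=2$, the Riesz basis property of normalized reproducing kernels over an interpolating sequence) and may be quoted from the literature. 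The task is then to find a subspace $M\cong\ell^p$ of $H^p$ on which $J_b$ is bounded below.

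For the lower estimate I use the basic identity $R(J_bf)=f\cdot Rb$ together with the area-function description of the $H^p$-norm,
\[
\|J_bf\|_{H^p}^p\asymp|J_bf(0)|^p+\int_{\Sn}\Bigl(\int_{\Gamma(\zeta)}|f(z)|^2\,|Rb(z)|^2\,(1-|z|^2)^{1-n}\,dV(z)\Bigr)^{p/2}\,d\sigma(\zeta).
\]
Writing $F=\sum_k c_kf_k$, restricting the inner integral to $W_k$ and the outer one to the disjoint windows $Q_k$ (recall $W_k\subset\Gamma(\zeta)$ for $\zeta\in Q_k$), and using $\mu_b(W_k)\gtrsim(1-|a_k|^2)^n$ together with $(1-|z|^2)\asymp(1-|a_k|^2)$ on $W_k$, everything is reduced to the bound $\int_{W_k}|F|^2\,|Rb|^2\,(1-|z|^2)^{1-n}\,dV\gtrsim|c_k|^2\,(1-|a_k|^2)^{-2n/p}$, which --- once granted --- gives $\|J_bF\|_{H^p}^p\gtrsim\sum_k|c_k|^p\asymp\|F\|_{H^p}^p$ and finishes the proof. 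I expect this last bound to be the crux: the kernels $f_k$ have heavy polynomial tails, so on $W_k$ the function $F$ need not be dominated by its $k$-th summand (this already fails in the disk when the $a_k$ cluster at one boundary point), and a naive pointwise argument collapses. As in the disk case \cite{M1,MNST}, the way around it should be a Bessaga--Pe\l czy\'nski gliding-hump argument: pass to a further subsequence so that $\{J_bf_k\}$ is a small perturbation of a normalized block-type sequence whose $\ell^p$-equivalence does follow from the area-integral lower bound and the disjointness of the regions $W_k$; equivalently, one chooses $\{a_k\}$ increasing to the boundary so rapidly that all cross-interactions between distinct kernels on the windows $W_k$ become summable and can be absorbed. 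For $1<p<\infty$, reflexivity of $H^p$ makes the weak-null selection automatic; the case $p=1$ needs the usual, more delicate, separate treatment.
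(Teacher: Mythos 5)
Your overall strategy (reproducing-kernel test functions concentrating at the boundary, the area-function form of $\|J_bf\|_{H^p}$ via $R(J_bf)=f\,Rb$, and a gliding-hump selection) is the right one and is close in spirit to the paper's, but the proposal has two genuine gaps. First, the localization step at the very start is not justified: from the failure of the vanishing Carleson condition you claim that a ``standard iteration'' produces Bergman balls $W_k$ of fixed radius with $\mu_b(W_k)\ge\delta'(1-|a_k|^2)^n$. This does not follow from the Carleson bound: the mass of a bad box $B_{\zeta}(\delta)$ can be distributed across all scales inside the box, and the Carleson inequality applied to the children of the box gives no contradiction, so no single Whitney/Bergman piece need carry a fixed proportion of the mass. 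The paper sidesteps this entirely: Proposition \ref{normlimit} uses the kernel form of the non-$\VMOA$ condition, $\limsup_{|a|\to1}\int_{\Bn}\frac{1-|a|^2}{|1-\langle z,a\rangle|^{n+1}}\,d\mu_b(z)>0$, i.e. $\limsup\int|f_a|^p\,d\mu_b>0$ with the whole weighted integral (no localization to a Bergman ball), and then converts this into $\limsup_{|a|\to1}\|J_bf_a\|_{H^p}>0$ via the two-sided Hardy--Stein/area estimates of \cite{Pau2016}. Your argument needs either this conversion or a proof of the top-shell concentration; neither is supplied.

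Second, and more seriously, the lower bound $\|J_bF\|_{H^p}^p\gtrsim\sum_k|c_k|^p$ --- which is the entire content of the theorem --- is explicitly left as an expectation (``I expect this last bound to be the crux \dots the way around it should be a gliding-hump argument''). You correctly diagnose that the pointwise domination of $F$ by its $k$-th summand on $W_k$ fails, but you do not execute the repair. The paper's repair is concrete: Lemma \ref{testfunctions}(iii)--(iv) shows that the boundary functions $J_bf_{a_k}$ concentrate their $L^p(\sigma)$ mass on shrinking nonisotropic caps $S_{\varepsilon}(\omega)$ around the common limit point $\omega$ (this uses the pointwise bound $|b(z)|\lesssim\|b\|_{BMOA}\log\frac{1}{1-|z|}$ to control the integral defining $J_b f_{a_k}$ away from $\omega$); combined with $\|J_bf_{a_k}\|_{H^p}\to c>0$ from Proposition \ref{normlimit}, one selects a subsequence whose images are a small perturbation of functions with disjoint supports on $\Sn$, giving both the boundedness of $V$ and the lower bound for $U=J_bV$ simultaneously (and uniformly in $1\le p<\infty$, with no separate treatment of $p=1$ and no appeal to reflexivity). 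Your disjoint windows $Q_k\subset\Sn$ sitting below disjoint Bergman balls do not by themselves yield this, because the outer $d\sigma$-integral over $Q_k$ still sees the tails of all the other kernels through the area integral over $\Gamma(\zeta)\setminus W_k$; some quantitative decay of the cross terms, analogous to Lemma \ref{testfunctions}, must be proved before the blocking argument closes. The secondary claims (that thin sequences of normalized kernels span $\ell^p$ with two-sided bounds) are plausible but also unproved; note the paper gets the lower bound for $V$ for free from $\|V\alpha\|\ge\|J_b\|^{-1}\|J_bV\alpha\|$, so it never needs an a priori basis theorem for the kernels.
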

As a consequence, for $b\in BMOA(\Bn)\setminus VMOA(\Bn)$, the operator $J_b$ is not $\ell^p$-singular. Hence the notions of compactness, strict singularity and $\ell^p$-singularity coincide in the case of $J_b$ acting on $H^p.$

Theorem \ref{T_gstrictsing} is established in a similar manner as in the one-dimensional case, by constructing bounded operators $V\colon \ell^p \to H^p$ and $U\colon \ell^p \to H^p$ such that $U = J_bV$, where $V(\ell^p) = M$ is the  closed linear span of suitably chosen test functions $f_{a_k} \in H^p$ and the operator $U$ is an isomorphism onto its range $U(\ell^p) = J_b(M)$.\\

%\begin{figure}[H]
% \caption{\textbf{Operators $U, V$ and $J_b$}}
% \label{fig}
% \centering
% \begin{tikzpicture}
%   \matrix (m) [matrix of math nodes,row sep=3em,column sep=4em,minimum width=2em]
%   {
%     \ell^p & \\
%     H^p & H^p \\};
%  \path[-stealth]
%    (m-1-1) edge node [left] {$V$} (m-2-1)
%    (m-1-1) edge node [right] {$U$} (m-2-2)
%    (m-2-1.east|-m-2-2) edge node [below] {$J_b$}
%            node [above] {} (m-2-2)
%            edge [dashed,-] (m-2-1);
% \end{tikzpicture}
% \end{figure}

Our second main result extends the one obtained in \cite{MNST} to the setting of the unit ball. The proof requires different techniques, as some of the tools utilized in \cite{MNST} are not available or useful in higher dimensions such as the Riemann mapping theorem. We utilize different equivalent norms and Carleson measures among other techniques. It is interesting to contrast this result with the one obtained in \cite{LNST} for composition operators acting on the Hardy spaces of the unit disk, since composition operators do not exhibit as rigid behavior in regard to $\ell^2$-singularity as the operators $J_b$.

\begin{theorem}\label{mt2}
Let $b  \in BMOA(\Bn)$ and $1 \le p < \infty.$ If there exists a closed infinite-dimensional subspace $M \subset H^p$ such that $J_b\colon H^p \to H^p$ is bounded below on $M,$ then there exists a subspace $N \subset M$ isomorphic to $\ell^p.$ In particular, the operator $J_b$ acting on $H^p$ cannot fix an isomorphic copy of $\ell^2$, i.e.~it is $\ell^2$-singular when $p \ne 2.$
\end{theorem}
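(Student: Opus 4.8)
The plan is to argue by contradiction, combining a dichotomy for sequences in $H^p$ with the structure of $J_b$. Suppose $J_b$ is bounded below on a closed infinite-dimensional subspace $M \subset H^p$, say $\|J_b f\|_{H^p} \ge c\,\|f\|_{H^p}$ for all $f \in M$, and suppose $M$ contains no subspace isomorphic to $\ell^p$. By a standard subsequence-splitting / gliding-hump argument in $H^p$, any normalized sequence in $M$ either (i) has a subsequence equivalent to the unit vector basis of $\ell^p$ — which is excluded — or (ii) has a subsequence that is, up to small perturbation, "concentrated near the boundary" in the sense that it converges weakly to $0$ and the mass of each function escapes to the sphere. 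The first step is therefore to extract from $M$ a normalized basic sequence $(f_k)$ that is weakly null and whose tails concentrate; here one uses that bounded sequences in $H^p$ that do not contain $\ell^p$-subsequences must, after passing to a subsequence, behave like a sequence that is eventually supported (in the appropriate Carleson-measure sense) on shrinking Carleson boxes $Q(\zeta_k, \delta_k)$ with $\delta_k \to 0$ and with the pseudo-hyperbolic distances between the centers bounded below.

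The second step is to transfer this concentration through the identity $R(J_b f) = f \cdot Rb$ and the Hardy-space Carleson embedding. Writing $\|J_b f\|_{H^p}^p \asymp |J_bf(0)|^p + \int_{\Bn} |f(z)|^p\,|Rb(z)|^p (1-|z|^2)^{p-1}\,dV(z)$ (an equivalent norm via a Littlewood--Paley / area-function description for which $d\mu_b = |Rb(z)|^p(1-|z|^2)^{p-1}\,dV$ is a Carleson measure since $b \in BMOA$), the lower bound on $M$ forces $\int |f_k|^p\,d\mu_b \gtrsim c^p$ for all large $k$. But because the $f_k$ concentrate on $Q(\zeta_k,\delta_k)$ and $d\mu_b$ is a \emph{finite} Carleson measure, the contribution of $f_k$ away from its own box is small, while on the box itself we may compare $\int_{Q(\zeta_k,\delta_k)} |f_k|^p\,d\mu_b$ with $\mu_b(Q(\zeta_k,\delta_k))^{}\cdot \|f_k\|^p$-type quantities. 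The disjointness of the boxes then lets us build an operator $\ell^p \to H^p$, $(c_k) \mapsto \sum c_k f_k$, and show $J_b$ is an isomorphism onto its range on this span, with the span isomorphic to $\ell^p$ — contradicting the assumption that $M$ has no $\ell^p$-subspace. (Alternatively, one shows directly that the concentrated weakly null sequence $(f_k)$ is itself, after perturbation, equivalent to the $\ell^p$-basis inside $M$, which is the cleaner route.)

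The main obstacle is the first step: producing, inside an \emph{arbitrary} closed infinite-dimensional subspace $M$ on which $J_b$ is bounded below, a normalized sequence with genuinely disjoint Carleson-box concentration. One cannot simply choose reproducing-kernel test functions $f_{a_k}$ as in Theorem \ref{T_gstrictsing}, since those need not lie in $M$; instead one must use the weak compactness of the unit ball of $H^p$ for $p>1$ (and a separate argument for $p=1$), extract a weakly null normalized sequence in $M$, and then run a gliding-hump scheme with respect to the dilation operators $f \mapsto f_r$, $f_r(z)=f(rz)$, to separate the "inner" and "boundary" parts. The technical heart is quantifying the boundary concentration in a way compatible with the Carleson measure $\mu_b$ and with the area-function norm — precisely the point where the disk-case tools (the Riemann map, conformal invariance) are unavailable and must be replaced by the equivalent norms, the automorphisms of $\Bn$, and the Carleson-measure machinery advertised in the paragraph preceding the theorem.

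Once the contradiction is reached, the final assertion is immediate: if $p\neq 2$ and $J_b$ fixed a copy of $\ell^2$, that copy would be a closed infinite-dimensional $M$ on which $J_b$ is bounded below, hence would contain a subspace isomorphic to $\ell^p$; but $\ell^2$ contains no subspace isomorphic to $\ell^p$ for $p\neq 2$, a contradiction. Thus $J_b$ is $\ell^2$-singular for $p\neq 2$.
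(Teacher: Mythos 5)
Your proposal is a plan rather than a proof, and the part you yourself identify as ``the main obstacle'' is precisely the content of the theorem. The dichotomy you invoke --- every normalized sequence in $H^p$ either has a subsequence equivalent to the $\ell^p$-basis or a subsequence concentrating on shrinking, separated Carleson boxes --- is not a standard result and is nowhere justified; without it neither branch of your contradiction argument gets off the ground. The paper needs no such dichotomy and no contradiction: from an arbitrary normalized basis of $M$ it extracts, by Montel's theorem, a subsequence $g_{n_k}\to g$ uniformly on compacta, normalizes the differences to obtain $(f_k)\subset M$ with $\|f_k\|_{H^p}=1$ tending to $0$ uniformly on compact subsets of $\Bn$ (with $\inf_k\|J_bf_k\|_{H^p}\gtrsim 1$ coming from the lower bound on $M$), and then proves directly (Lemmas \ref{bddU} and \ref{bddbelU}) that after a further extraction the map $\alpha\mapsto\sum_k\alpha_kJ_bf_k$ is an isomorphism of $\ell^p$ onto its range, so $N=\overline{\mathrm{span}}\{f_k\}$ is the desired copy of $\ell^p$ inside $M$. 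All the real work is in those two lemmas and their preparation (Lemmas \ref{le_vcarleson}--\ref{le_functionals-bis}): an Egorov-type exhaustion of $\Sn$ by compacts $K_m$ over whose admissible regions $\mu_b$ becomes a vanishing Carleson measure, and the resulting almost-orthogonality estimates on the annuli $\tilde E_m$. None of this machinery appears in your sketch.

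There is also an analytic error in your second step: the claimed equivalence $\|J_bf\|_{H^p}^p\asymp|J_bf(0)|^p+\int_{\Bn}|f|^p|Rb|^p(1-|z|^2)^{p-1}\,dv$ is false for $p\neq2$; the $p$-th power Littlewood--Paley inequality is one-sided (one direction for $p\le2$, the other for $p\ge2$), and $b\in BMOA(\Bn)$ is characterized by $|Rb|^2(1-|z|^2)\,dv$ being Carleson, not by the $p$-th power measure you write. The correct substitutes, used throughout Section \ref{s3}, are Calder\'on's area theorem, $\|J_bf\|_{H^p}^p\asymp\int_{\Sn}\bigl(\int_{\Gamma(\zeta)}|f|^2|Rb|^2\,dV_{1-n}\bigr)^{p/2}d\sigma(\zeta)$, and the Hardy--Stein form for $p>2$; so even granting your concentration step, the transfer of the lower bound as written would not go through. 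Your final paragraph, deducing $\ell^2$-singularity for $p\neq2$ from the fact that $\ell^2$ contains no copy of $\ell^p$, is correct and is exactly how the paper concludes.
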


We use some standard notation. For any two points $z=(z_ 1,\dots,z_ n)$ and $w=(w_ 1,\dots,w_ n)$ in $\Cn$, we write
$\langle z,w\rangle =z_ 1\bar{w}_ 1+\dots +z_ n \bar{w}_ n,$
 and
 $|z|=\sqrt{\langle z,z\rangle}.$ Typically constants are used with no attempt to calculate their exact values. Given two positive quantities $A$ and $B$, depending on some parameters, we write $A\lesssim B$ to mean that there exists some inessential constant $C>0$ so that $A\leq C B$. The relation $A\gtrsim B$ is defined in an analogous way, and $A \asymp B$ means that both $A\lesssim B$ and $A\gtrsim B$  hold.

The paper is organized as follows. In  Section \ref{sp}, we provide an auxiliary result needed to establish Theorem \ref{T_gstrictsing} in Section \ref{s2}; and  Section \ref{s3} covers our second main result Theorem \ref{mt2}  for which Lemmas \ref{le_vcarleson}-\ref{bddbelU} are crucial tools.

\section{Preliminaries}\label{sp}
It is well known that any function in $H^p$ has radial limits $f(\zeta)=\lim_{r\to 1^{-}} f(r\zeta)$ for a.e. $\zeta \in \Sn$, and $\|f\|^p_{H^p}=\int_{\Sn}|f|^p d\sigma$.
For each $a\in \Bn$, consider the test function
\[
f_a(z) = \frac{(1-|a|^2)^{1/p}}{(1- \langle z, a \rangle)^{(n+1)/p}},\qquad z\in \Bn.
\]
It is easy to see that $f_ a \in H^p$ with $\|f_ a\|_{H^p}\asymp1$, and its radial derivative is given by
\[
Rf_ a(z)=\frac{(n+1)}{p} \,(1-|a|^2)^{1/p}\frac{\langle z,a \rangle }{(1-\langle z,a \rangle )^{\frac{n+1+p}{p}}},\qquad z\in \Bn.
\]
 We need the following lemma regarding their values and the values of $J_b f_a$ peaking in certain subsets of $\Sp^n$.

\begin{lemma}
\label{testfunctions}
Let $b \in \BMOA(\Bn), \, 1 \le p < \infty,$  and $(a_k) \subset \B^n$ be a sequence such that $a_k \to \omega \in \Sp^n$. Define a non-isotropic metric ball $$S_\varepsilon(\omega) = \{z \in \mathbb{S}^n: |1 - \langle z, \omega \rangle| < \varepsilon\}$$ for each $\varepsilon  > 0$. Then
\[
\begin{split}
&\textrm{(i) } \lim_{k  \to \infty}\int_{\Sp^n \setminus S_\varepsilon(\omega)} |f_{a_k}|^p \, d\sigma = 0 \textrm{ for each $\varepsilon  >0$;} \\
&\textrm{(ii) } \lim_{\varepsilon \to 0}\int_{S_\varepsilon(\omega)} |f_{a_k}|^p \, d\sigma = 0 \textrm{ for each $k$;} \\
&(iii) \lim_{k \to \infty} \int_{\Sp^n \setminus S_\varepsilon(\omega)} |J_bf_{a_k}|^p \, d\sigma = 0 \textrm{ for each $\varepsilon > 0$;} \\
&(iv) \lim_{\varepsilon \to 0} \int_{S_\varepsilon(\omega)} |J_b f_{a_k}|^p \, d\sigma = 0 \textrm{ for each k.}
\end{split}
\]
\end{lemma}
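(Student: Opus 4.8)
The plan is to prove the four limits in Lemma~\ref{testfunctions} by carefully estimating the kernel quantities $|1-\langle z, a_k\rangle|$ on the sphere and combining them with standard integral estimates over non-isotropic balls.

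\textbf{Parts (i) and (ii).} First I would record the elementary geometric fact that if $z \in \mathbb{S}^n \setminus S_\varepsilon(\omega)$ and $a_k$ is sufficiently close to $\omega$, then $|1-\langle z, a_k\rangle| \gtrsim \varepsilon$; this follows since $|1-\langle z,a_k\rangle| \ge |1-\langle z,\omega\rangle| - |\langle z, \omega - a_k\rangle| \ge \varepsilon - |a_k - \omega|$, which exceeds $\varepsilon/2$ once $|a_k-\omega|<\varepsilon/2$. Plugging this into the definition of $f_{a_k}$ gives, for such $k$,
\[
\int_{\mathbb{S}^n\setminus S_\varepsilon(\omega)} |f_{a_k}|^p\,d\sigma \lesssim \frac{1-|a_k|^2}{\varepsilon^{n+1}} \to 0,
\]
since $|a_k|\to 1$; this proves (i). For (ii), fix $k$. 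Since $f_{a_k}\in H^p$, the function $|f_{a_k}|^p$ is integrable on $\mathbb{S}^n$ against $d\sigma$, and $\sigma(S_\varepsilon(\omega)) \to 0$ as $\varepsilon \to 0$ (the non-isotropic balls shrink to a point), so dominated convergence (or absolute continuity of the integral) gives $\int_{S_\varepsilon(\omega)} |f_{a_k}|^p\,d\sigma \to 0$. This part is essentially free.

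\textbf{Parts (iii) and (iv).} Here the key observation is that $J_b$ is bounded on $H^p$ since $b\in BMOA(\Bn)$, but boundedness alone is not enough because it does not localize; instead I would exploit the identity $R(J_b f)(z) = f(z)\,Rb(z)$ together with the fact that $J_b f_{a_k}(0) = 0$. For (iv) I would argue as in (ii): for fixed $k$, the function $J_b f_{a_k}$ lies in $H^p$, hence $|J_b f_{a_k}|^p$ is $\sigma$-integrable, and since $\sigma(S_\varepsilon(\omega))\to 0$, absolute continuity of the integral gives the conclusion. For (iii), which is the genuinely substantial part, the natural route is to split $\{f_{a_k}\}$ into a piece supported near $\omega$ and a small remainder. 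More precisely, one writes $f_{a_k} = g_k + h_k$ where $g_k = f_{a_k}\cdot \chi$ for a smooth cutoff near $\omega$ is not holomorphic, so instead I would use the weak-star convergence $f_{a_k}\to 0$ in $H^p$ (for $p>1$; the sequence $f_{a_k}$ converges to $0$ uniformly on compact subsets of $\Bn$ and is norm-bounded) and the fact that $J_b$ maps bounded sequences converging to $0$ uniformly on compacta to sequences with the same property, to deduce $J_b f_{a_k}\to 0$ uniformly on compact subsets of $\Bn$. The difficulty is that $\mathbb{S}^n\setminus S_\varepsilon(\omega)$ is on the boundary, not a compact subset of $\Bn$. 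To bridge this gap I would invoke a Carleson-measure / atomic-decomposition style estimate: express $\|J_b f_{a_k}\|^p_{H^p(\mathbb{S}^n\setminus S_\varepsilon(\omega))}$ via a tent-space or area-integral characterization of $H^p$, namely $\|F\|_{H^p}^p \asymp |F(0)|^p + \int_{\mathbb{S}^n}\big(\int_{\Gamma(\zeta)} |RF(z)|^2(1-|z|^2)\,d\lambda(z)\big)^{p/2} d\sigma(\zeta)$, use $R(J_bf_{a_k}) = f_{a_k}Rb$, and estimate the area integral over Koranyi admissible approach regions $\Gamma(\zeta)$ with $\zeta \in \mathbb{S}^n\setminus S_\varepsilon(\omega)$; for such $\zeta$ the region $\Gamma(\zeta)$ stays at distance $\gtrsim \varepsilon$ from $\omega$, so $|f_{a_k}(z)|$ is uniformly small there by the same kernel estimate as in (i), while the measure $|Rb(z)|^2(1-|z|^2)\,d\lambda(z)$ is a Carleson measure because $b\in BMOA$. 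Carrying out this estimate carefully yields $\int_{\mathbb{S}^n\setminus S_\varepsilon(\omega)}|J_b f_{a_k}|^p\,d\sigma \to 0$.

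\textbf{Main obstacle.} The main obstacle is part (iii): one must transfer the "smallness away from $\omega$" of the test functions through the integral operator $J_b$, and because this smallness lives on the boundary sphere rather than on a compact subset of the ball, a bare compactness-of-$J_b$ or uniform-convergence argument does not suffice. The right tool is the Littlewood--Paley / area-function description of the $H^p$ norm combined with the $BMOA$ Carleson-measure property of $|Rb(z)|^2(1-|z|^2)\,d\lambda(z)$; the technical care needed is in controlling the admissible approach regions $\Gamma(\zeta)$ for $\zeta$ outside $S_\varepsilon(\omega)$ and verifying they remain bounded away from $\omega$, so that the pointwise kernel bound $|f_{a_k}(z)| \lesssim (1-|a_k|^2)^{1/p}\varepsilon^{-(n+1)/p}$ can be used uniformly on those regions. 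Everything else reduces to routine kernel estimates and the absolute continuity of the integral.
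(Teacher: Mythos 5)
Parts (i), (ii) and (iv) of your argument coincide with the paper's: the kernel lower bound $|1-\langle z,a_k\rangle|\gtrsim\varepsilon$ off $S_\varepsilon(\omega)$ gives (i), and absolute continuity of the integrals of $|f_{a_k}|^p$ and $|J_bf_{a_k}|^p$ gives (ii) and (iv). The problem is (iii), which you correctly single out as the substantial part. Your geometric observation is sound: by the triangle inequality for $d(z,w)=|1-\langle z,w\rangle|^{1/2}$, every $z$ in the sawtooth region $\bigcup_{\zeta\in\Sp^n\setminus S_\varepsilon(\omega)}\Gamma(\zeta)$ satisfies $|1-\langle z,\omega\rangle|\gtrsim\varepsilon$, so $|f_{a_k}|$ is uniformly $O((1-|a_k|^2)^{1/p})$ there. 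But the step you wave through (``carrying out this estimate carefully yields\dots'') is exactly where the argument breaks: the Calder\'on area theorem is a \emph{global} norm equivalence $\|F\|_{H^p}^p\asymp|F(0)|^p+\int_{\Sp^n}(SF)^p\,d\sigma$, and smallness of $\int_{E}(S(J_bf_{a_k}))^p\,d\sigma$ over a proper subset $E=\Sp^n\setminus S_\varepsilon(\omega)$ does not by itself control $\int_{E}|J_bf_{a_k}|^p\,d\sigma$ — the global norm $\|J_bf_{a_k}\|_{H^p}$ does \emph{not} tend to zero here (that is the whole point when $b\notin\VMOA$), so any argument must genuinely localize. What you would need is a local Calder\'on--Stein type inequality $\int_E|F|^p\,d\sigma\lesssim|F(0)|^p+\int_{E'}(S_\beta F)^p\,d\sigma$ with enlarged apertures/sawtooth regions; such results exist but are a nontrivial piece of machinery that you neither prove nor cite, and they are not in the paper. (Your intermediate detour through weak-star convergence and ``$J_b$ preserves uniform convergence on compacta'' is, as you yourself note, a dead end and should be cut.)

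The paper avoids this entirely by a pointwise radial argument: integrating by parts in $t$ (using $\tfrac{d}{dt}b(t\zeta)=\tfrac1tRb(t\zeta)$ and $b(0)=0$) gives
\[
J_bf_{a_k}(\zeta)=f_{a_k}(\zeta)b(\zeta)-\int_0^1 Rf_{a_k}(t\zeta)\,b(t\zeta)\,\frac{dt}{t},
\]
and then one shows the lower bound $|1-\langle r u_0,a_k\rangle|\ge\delta(\varepsilon)$ along the \emph{entire ray} $0\le r\le 1$ for $u_0\notin S_\varepsilon(\omega)$ (this needs the two-case analysis $r\le 1-\varepsilon^2$ versus $r>1-\varepsilon^2$, not just the boundary estimate from (i)), so that both $|f_{a_k}(\zeta)|$ and $\int_0^1|Rf_{a_k}(t\zeta)|\,dt$ are $O((1-|a_k|^2)^{1/p})$ there; the factor $b$ is handled by $\|b\|_{H^p}\lesssim\|b\|_{\BMOA}$ and the pointwise bound $|b(z)|\lesssim\|b\|_{\BMOA}\log\frac1{1-|z|}$, which makes $\int_0^1|b(t\zeta)|\,dt$ uniformly bounded. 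If you want to salvage your area-integral route you must supply the localized area theorem; otherwise the elementary integration-by-parts estimate is the intended fix.
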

\begin{proof}
Fix $\varepsilon > 0$ and $u_0 \in \Sp^n \setminus S_{\varepsilon}(\omega)$.
Now $|1 - \langle u_0, \omega\rangle| \ge \varepsilon$.
Thus it holds that $|1 - \langle u_0, a_k\rangle| \ge \varepsilon/2$ for $k$ large enough.
So we may choose $\delta = \delta(\varepsilon) > 0$ so that $|1 - \langle u_0, a_k\rangle| \ge \delta$ for all $k$ large enough and all $u_0 \in \Sp^n \setminus S_{\varepsilon}(\omega)$ and the condition \textbf{(i)} follows. The proof of \textbf{(ii)} follows from the absolute continuity of the measures $A \mapsto \int_A |f_{a_k}|^p \, d\sigma.$

\textbf{(iii)} Let now $0 < \varepsilon < 1/2.$ We may assume that $b(0)=0$. We first confirm that there exists $\delta = \delta(\varepsilon) > 0$ such that $|1 - \langle ru_0, a_k \rangle| \ge \delta$ for all $k$ large enough and all $u_0 \in \Sp^n \setminus S_{\varepsilon}(\omega)$ and $0 \le r \le 1$.
Fix $u_0 \in \Sp^n \setminus S_{\varepsilon}(\omega)$ and suppose that $0 \le r \le 1 - \varepsilon^2.$ Then $$|1 - \langle ru_0, \omega\rangle| \ge 1 - r|\langle u_0, \omega \rangle| \ge 1-(1-\varepsilon^2) = \varepsilon^2.$$ Hence $|1 - \langle ru_0, a_k \rangle| \ge \frac{\varepsilon^2}{2}$ for $k \ge k_0$ for some $k_0 = k_0(\varepsilon)$ and all $0 \le r \le 1-\varepsilon^2$.
Consider then the case $1-\varepsilon^2 < r \le 1.$ Now $$|1 - \langle ru_0, \omega\rangle| = |r- \langle ru_0, \omega\rangle + 1 - r| \ge r |1-\langle u_0, \omega\rangle| - (1-r) \ge r\varepsilon - \varepsilon^2 = \varepsilon(r-\varepsilon) > \varepsilon (1/2 - \varepsilon).$$ Thus it holds that $$|1 - \langle ru_0, a_k\rangle| \ge \varepsilon/2 (1/2 - \varepsilon)$$ for all $k$ large enough and all $1-\varepsilon^2 < r \le 1.$ So we may choose $\delta = \delta(\varepsilon) > 0$ so that $|1 - \langle ru_0, a_k\rangle| \ge \delta$ for all $k$ large enough and all $u_0 \in \Sp^n \setminus S_{\varepsilon}(\omega)$ and $0 \le r \le 1$. For those $u_0$ and $r$ we obtain the estimates
\[
|f_{a_k}(ru_0)|^p = \frac{1-|a_k|^2}{|1- \langle ru_0, a_k \rangle|^{n+1}} \le \frac{1-|a_k|^2}{\delta^{n+1}}
\]
 and
\[
\begin{split}
|Rf_{a_k}(ru_0)|^p &= \left (\frac{n+1}{p}\right )^p \,\frac{(1-|a_ k|^2)\,|\langle ru_ 0,a_ k \rangle |^p}{|1-\langle ru_ 0,a_ k \rangle |^{n+1+p}}
\\
&\le  \frac{C r^p(1-|a_k|^2)}{|1-\langle ru_0, a_k \rangle|^{n + 1 + p}} \le  \frac{Cr^p\,(1-|a_k|^2)}{\delta^{n+1+p}},
\end{split}
\]
for all $k$ large enough, where $C = C(n,p) > 0.$ Now, for a.e. $\zeta \in \Sp^n\setminus S_\varepsilon(\omega)$, we obtain
\[
\begin{split}
|J_bf_{a_k}(\zeta)|^p &= \lim_{r\rightarrow 1^{-}}\left| \int_0^1 f_{a_k}(t r \zeta) Rb(t r \zeta)\frac{dt}{t}\right|^p
\\
&\le C\left(|f_{a_k}(\zeta) b(\zeta)|^p + \left(\int_0^1 |Rf_{a_k}(t\zeta)b(t\zeta)|\frac{dt}{t}\right)^p \right)
\\
&\le C \left(\frac{1-|a_k|^2}{\delta^{n+1}} |b(\zeta)|^p + \frac{1-|a_k|^2}{\delta^{n+1+p}}\left(\int_0^1 |b(t\zeta)|\, dt\right)^p\right),
\end{split}
\]
where constants may depend on $n$ and $p$. Utilizing the well known pointwise estimate $|b(z)| \lesssim \|b\|_{BMOA} \log \frac{1}{1-|z|}$ for $z \in \B^n$, where and $\|b\|_{BMOA}$ is the $\BMOA$-seminorm, we have $\int_0^1 |b(t\zeta)|\, dt \le C \|b\|_{BMOA}.$ Therefore $$\int_{\Sp^n \setminus S_\varepsilon(\omega)}|J_bf_{a_k}(\zeta)|^p \, d\sigma(\zeta) \le C \left( \frac{1-|a_k|^2}{\delta^{n+1}} \|b\|_{H^p}^p + \frac{1-|a_k|^2}{\delta^{n+1+p}}\|b\|_{BMOA}^p\right) \to 0,$$ as $k \to \infty,$ since $\|b\|_{H^p} \lesssim \|b\|_{BMOA}< \infty.$

\textbf{(iv)} If $k$ is fixed, the claim follows from the absolute continuity of the measure $A \mapsto \int_{A}|J_b f_{a_k}|^p \, d\sigma$.
\end{proof}

\section{$\ell^p-$singularity of $J_b$}\label{s2}
In this section, we establish the fact that a non-compact integration operator $J_b$ acting on $H^p$ fixes a copy of $\ell^p$. We begin with an auxiliary result.
\begin{prop}
\label{bddmap}
Let $1 \le p < \infty$ and $(a_k) \subset \B^n$ be a sequence such that $a_k \to \omega \in \Sp^n$. Then there exists a subsequence $(b_k)$ of $(a_k)$ such that the mapping $V\colon \ell^p \to H^p$ defined as  $$ V(\alpha) = \sum_{k = 1}^\infty \alpha_k f_{b_k},$$ where $\alpha = (\alpha_k) \in \ell^p$, is bounded.
\end{prop}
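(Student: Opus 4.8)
The plan is to choose the subsequence $(b_k)$ so that the test functions $f_{b_k}$ are ``almost disjointly supported'' on the sphere, and then to show that such a sequence spans a copy of $\ell^p$ on which $V$ acts boundedly. Concretely, using part (i) of Lemma \ref{testfunctions} I would argue that $\int_{\Sn \setminus S_\varepsilon(\omega)} |f_{a_k}|^p\,d\sigma \to 0$ as $k \to \infty$ for each $\varepsilon$, while part (ii) says that for each fixed $k$ the mass $\int_{S_\varepsilon(\omega)} |f_{a_k}|^p\,d\sigma \to 0$ as $\varepsilon \to 0$. Combining these two facts, I can inductively select a subsequence $(b_k)$ and a decreasing sequence of radii $\varepsilon_k \downarrow 0$ such that, writing $A_k = S_{\varepsilon_k}(\omega)\setminus S_{\varepsilon_{k+1}}(\omega)$, the bulk of the $H^p$-mass of $f_{b_k}$ sits on $A_k$: that is, $\int_{\Sn \setminus A_k} |f_{b_k}|^p\,d\sigma < 2^{-k}\eta$ for a small fixed $\eta$, while $\|f_{b_k}\|_{H^p}^p \asymp 1$. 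The sets $A_k$ are pairwise disjoint.

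Given this selection, the boundedness of $V$ follows from a standard ``small perturbation of a disjoint sequence'' estimate. For $\alpha = (\alpha_k) \in \ell^p$, write $f_{b_k} = g_k + h_k$ where $g_k = f_{b_k}\chi_{A_k}$ (restricted to boundary values) and $h_k = f_{b_k} - g_k$, so that $\|h_k\|_{H^p}^p < 2^{-k}\eta$. Since the $g_k$ have pairwise disjoint supports, $\big\|\sum_k \alpha_k g_k\big\|_{H^p}^p = \sum_k |\alpha_k|^p \|g_k\|_{H^p}^p \le C\|\alpha\|_{\ell^p}^p$, while the tail is controlled by
\[
\Big\|\sum_k \alpha_k h_k\Big\|_{H^p} \le \sum_k |\alpha_k|\,\|h_k\|_{H^p} \le \|\alpha\|_{\ell^p}\Big(\sum_k \|h_k\|_{H^p}^{p'}\Big)^{1/p'} \le C\,\eta^{1/p}\|\alpha\|_{\ell^p}
\]
when $p>1$ (and directly $\sum_k |\alpha_k|\|h_k\|_{H^p} \le \eta \|\alpha\|_{\ell^1}$ when $p=1$, using $\|h_k\|_{H^p} < \eta$). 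Adding the two contributions gives $\|V\alpha\|_{H^p} \le C\|\alpha\|_{\ell^p}$; one should also note the series defining $V\alpha$ converges in $H^p$ by the same Cauchy estimate applied to tails, so $V$ is a well-defined bounded linear operator.

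The main technical point to get right is the inductive selection: one must alternate between the two limits in Lemma \ref{testfunctions}, first shrinking $\varepsilon_{k+1}$ so that $f_{b_k}$ puts negligible mass inside $S_{\varepsilon_{k+1}}(\omega)$ (possible by (ii) since $b_k$ is already chosen at that stage), and then passing far enough along the sequence $(a_k)$ to pick $b_{k+1}$ putting negligible mass outside $S_{\varepsilon_{k+1}}(\omega)$ (possible by (i)). Keeping track of both ``inner'' and ``outer'' leakage simultaneously, and ensuring the total leakage $\sum_k \|h_k\|_{H^p}^p$ is as small as desired, is the part that needs care; everything after that is the routine disjointification estimate above. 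I do not expect to need any property of $b$ here — this proposition is purely about the test functions $f_a$ — so $\BMOA$ enters only later when the companion operator $U = J_b V$ is analyzed.
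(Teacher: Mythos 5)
Your proposal is correct and is essentially the paper's own argument: the paper simply defers to the one-dimensional case \cite[Proposition 3.2]{M1}, whose proof is exactly the gliding-hump selection via parts (i) and (ii) of Lemma \ref{testfunctions} followed by the disjoint-support-plus-small-perturbation estimate in $L^p(\Sn)$ that you describe. The only cosmetic point is that the truncations $g_k$ are not themselves in $H^p$, so the norm computations should be read in $L^p(\Sn)$ with $H^p$ viewed as a closed subspace via boundary values, which is how the convergence of the series in $H^p$ is also justified.
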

\begin{proof}
One just needs to follow the proof given in the one-dimensional case given in \cite[Proposition 3.2]{M1} using our Lemma \ref{testfunctions} as a replacement of Lemma 3.1 of \cite{M1}. We left the details to the interested reader.
\end{proof}

\begin{prop}
\label{normlimit}
Let $b \in \BMOA(\Bn) \setminus \VMOA(\Bn)$ and $1 \le p < \infty$. Then $$c:= \limsup_{|a| \to 1}\|J_b f_a\|_{H^p} > 0.$$ In particular, there exists a sequence $(a_k) \subset \B^n$ such that $a_k \to \omega \in \Sp^n$ and $$\lim_{k \to \infty}\|J_b f_{a_k}\|_{H^p} = c.$$
\end{prop}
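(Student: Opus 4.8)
The plan is to show that $J_b$ fails to be compact precisely because $b \notin \VMOA(\Bn)$, and to extract this failure through the test functions $f_a$. Recall that $J_b$ is compact on $H^p$ if and only if $b \in \VMOA(\Bn)$. Since here $b \in \BMOA(\Bn) \setminus \VMOA(\Bn)$, the operator $J_b$ is bounded but not compact. The family $(f_a)_{a \in \Bn}$ is bounded in $H^p$ (with $\|f_a\|_{H^p} \asymp 1$) and converges to $0$ uniformly on compact subsets of $\Bn$ as $|a| \to 1$; hence $f_a \to 0$ weakly in $H^p$. If we had $\|J_b f_a\|_{H^p} \to 0$ as $|a| \to 1$, this alone would not immediately give compactness of $J_b$, so the argument must be run the other way: I would assume for contradiction that $c = \limsup_{|a|\to 1} \|J_b f_a\|_{H^p} = 0$ and derive that $b \in \VMOA(\Bn)$.

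The key step is the link between $\|J_b f_a\|_{H^p}$ and the $\VMOA$ (equivalently, vanishing Carleson) condition on $b$. Using the basic identity $R(J_b f)(z) = f(z)\, Rb(z)$, one controls $\|J_b f_a\|_{H^p}$ from below by a quantity measuring the size of $Rb$ against the Carleson box associated to $a$. Concretely, the $H^p$-norm of $J_bf_a$ is comparable (via a Littlewood--Paley / area-function description of $H^p$, or via the known characterization of boundedness of $J_b$ through the measure $d\mu_b(z) = |Rb(z)|^2(1-|z|^2)\,dV(z)$ being Carleson) to an integral of $|Rb|^2|f_a|^2(1-|z|^2)$ over $\Bn$ raised to the power $p/2$, and choosing $a$ near a point $\omega \in \Sn$ where the Carleson condition for $\mu_b$ fails to vanish forces $\|J_bf_a\|_{H^p}$ to stay bounded away from $0$ along a suitable sequence $a_k \to \omega$. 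The membership $b \notin \VMOA$ guarantees exactly the existence of such a point $\omega$ and sequence. Thus $c > 0$.

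Once $c>0$ is established, the second assertion is essentially a compactness/limit extraction: by definition of $\limsup$, there is a sequence $(a_k)$ with $|a_k| \to 1$ and $\|J_b f_{a_k}\|_{H^p} \to c$. Since $\Sn$ is compact, after passing to a subsequence we may assume $a_k \to \omega$ for some $\omega \in \Sn$; the value of the limit is unchanged by this passage. This gives the stated sequence.

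The main obstacle I expect is making rigorous the lower bound $\liminf_k \|J_b f_{a_k}\|_{H^p} > 0$ in terms of the failure of the vanishing Carleson condition for $d\mu_b$. One needs the right equivalent norm for $H^p$ — for instance $\|g\|_{H^p}^p \asymp |g(0)|^p + \int_{\Sn}\Big(\int_{\Gamma(\zeta)} |Rg(z)|^2 (1-|z|^2)^{1-n}\,dV(z)\Big)^{p/2} d\sigma(\zeta)$ with $\Gamma(\zeta)$ an admissible approach region — applied to $g = J_b f_a$, so that $Rg = f_a \cdot Rb$. Then, restricting the area integral to the Carleson box $Q_a$ near $a$ where $|f_a(z)|^2 (1-|z|^2) \gtrsim (1-|a|^2)^{2/p - (n+1)/p + \text{(correction)}}$ is comparably large, and invoking that along $(a_k)$ the normalized mass $\mu_b(Q_{a_k})/\sigma(Q_{a_k})$ does not tend to $0$, yields the desired positive lower bound. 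Keeping track of the exponents and the interplay between the $p/2$ power and the Carleson estimate is the delicate bookkeeping; everything else is standard.
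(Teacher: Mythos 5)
Your overall strategy is the same as the paper's: bound $\|J_bf_a\|_{H^p}$ from below by an area-type quantity built from $d\mu_b=|Rb|^2(1-|z|^2)\,dv$, and use $b\notin \VMOA(\Bn)$ to keep that quantity away from zero along a sequence $|a_k|\to 1$; the extraction of $\omega$ by compactness of $\Sn$ at the end is also exactly what is needed. The gap is in the one step you yourself flag as ``delicate bookkeeping'': it is not mere bookkeeping, and as sketched your mechanism does not close. You propose to lower-bound
$\int_{\Sn}\bigl(\int_{\Gamma(\zeta)}|f_a|^2|Rb|^2\,dV_{1-n}\bigr)^{p/2}d\sigma$ by localizing to the Carleson box of $a$ and invoking $\mu_b(Q_{a_k})/\sigma(Q_{a_k})\not\to 0$. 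For $p\ge 2$ this can be made to work, but only after an extra Jensen/H\"older step on the outer integral over $I(a)$ that you do not state (and your exponent for $|f_a|^2(1-|z|^2)$ is left with a literal ``correction'' placeholder). For $1\le p<2$ the argument breaks: since $p/2<1$, H\"older gives $\int_{I(a)}A^{p/2}d\sigma\le\bigl(\int_{I(a)}A\,d\sigma\bigr)^{p/2}\sigma(I(a))^{1-p/2}$, i.e.\ the total mass $\mu_b(Q_a)$ controls the square-function integral from \emph{above}, not from below, so the non-vanishing of $\mu_b(Q_{a_k})/\sigma(Q_{a_k})$ gives you nothing. A genuinely different input is needed there.

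The paper sidesteps all of this by using two inequalities from \cite{Pau2016} involving $|f|^p$ rather than $|f|^2$, namely
$\int_{\Bn}|f|^p|Rb|^2(1-|z|^2)\,dv\le C\|f\|_{H^p}^{p-2}\|J_bf\|_{H^p}^{2}$ for $p>2$ and
$\int_{\Bn}|f|^p|Rb|^2(1-|z|^2)\,dv\le C\|b\|_{\BMOA}^{2-p}\|J_bf\|_{H^p}^{p}$ for $1\le p\le 2$. With $f=f_a$ the left-hand side is exactly
$\int_{\Bn}\frac{1-|a|^2}{|1-\langle z,a\rangle|^{n+1}}|Rb|^2(1-|z|^2)\,dv(z)$, which is precisely the quantity whose vanishing as $|a|\to1$ characterizes $\VMOA(\Bn)$ — so no Carleson-box localization or exponent juggling is needed at all, and both ranges of $p$ are handled uniformly. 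To repair your proof you would either need to import those inequalities (at which point you are doing the paper's proof) or supply a separate lower-bound argument for $1\le p<2$; as written, that case is missing.
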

\begin{proof}
We may assume $b(0) = 0$. We consider first the case $p  > 2$ and utilize the representation \cite[Chapter 5]{ZhuBn} $$\|b\|^2_{\BMOA} \asymp \sup_{\|f\|_{H^p}\asymp 1}\int_{\B^n}|f(z)|^p|Rb(z)|^2(1-|z|^2) dv(z),$$ where $dv(z)$ is the normalized volume measure on $\Bn$. Now for $f \in H^p,$ by the estimates obtained in pages 144-145 in \cite{Pau2016}, we have
\[
\int_{\B^n}|f(z)|^p|Rb(z)|^2(1-|z|^2)\, dv(z)\le C \|f\|_{H^p}^{p-2} \cdot \|J_bf\|_{H^p}^2,\qquad 2<p<\infty;
\]
and
\[
\int_{\B^n} |f(z)|^p |Rb(z)|^2 (1-|z|^2)\, dv(z)\le C \, \|b\|^{2-p}_{\BMOA} \cdot \|J_b f\|_{H^p}^p,\qquad 1\le p \le 2.
\]
By replacing $f$ with $f_a$ (note that $\|f_ a\|_{H^p}\asymp 1$), we have
\[
\|J_bf_a\|_{H^p}^2 \ge C \int_{\B^n}|f_a(z)|^p|Rb(z)|^2(1-|z|^2)\, dv(z),\qquad  2 < p < \infty;
\]
and
\[
\|b\|^{2-p}_{\BMOA} \cdot \|J_b f_a\|^p_{H^p} \ge C \int_{\B^n}|f_a(z)|^p|Rb(z)|^2(1-|z|^2)\, dv(z), \qquad 1 \le p \le 2.
 \]
It is well known \cite[Chapter 5]{ZhuBn} that a holomorphic function $g$ belongs to $VMOA(\Bn)$ if and only if
\[
\lim_{|a| \to 1^{-}} \int_{\Bn} \frac{(1-|a|^2)}{|1-\langle z,a \rangle |^{n+1}} \,|Rg(z)|^2(1-|z|^2)\, dv(z)=0.
\]
 Since $b \in \BMOA(\Bn) \setminus \VMOA(\Bn),$ it holds that
 \[
 \limsup_{|a| \to 1^{-}}\int_{\B^n}|f_a(z)|^p|Rb(z)|^2(1-|z|^2)\, dv(z) > 0
 \]
 and hence $\limsup_{|a| \to 1}\|J_b f_a\|_{H^p} >0$  for all $1 \le p < \infty.$
\end{proof}

As a final step towards the proof of Theorem \ref{T_gstrictsing}, we construct an isomorphism from $\ell^p$ into $H^p$ using a non-compact $J_b$ and test functions.
\begin{prop}
\label{isomorphism}
Let $b \in BMOA(\Bn) \setminus VMOA(\Bn), \, 1 \le p < \infty,$ and $(a_k) \subset \B^n$ be the sequence from Proposition \ref{normlimit}. Then there exists a subsequence $(b_k)$ of $(a_k)$ such that the mapping $U\colon \ell^p \to H^p,\, U(\alpha)=\sum_{k = 1}^\infty \alpha_k J_b f_{b_k},$ where $\alpha = (\alpha_k) \in \ell^p$, is an isomorphism onto its range.
\end{prop}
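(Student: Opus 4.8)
The plan is to use parts (iii) and (iv) of Lemma~\ref{testfunctions} to pass from the sequence $(a_k)$ of Proposition~\ref{normlimit} to a subsequence $(b_k)$ for which the boundary functions of $J_b f_{b_k}$ are \emph{almost disjointly supported} on $\Sn$, and then to invoke the classical principle of small perturbations to conclude that $(J_b f_{b_k})$ is equivalent to the unit vector basis of $\ell^p$, which is exactly the assertion that $U$ is an isomorphism onto its range.

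First I would pass to a subsequence (and relabel) so that, by Proposition~\ref{normlimit}, $c/2 \le \|J_b f_{a_k}\|_{H^p} \le C_0$ for every $k$, where $C_0 \asymp \|J_b\|_{H^p \to H^p}$ (recall $\|f_{a_k}\|_{H^p}\asymp 1$). Fix a summable sequence $\delta_k>0$ with $\sum_k \delta_k < c/8$; note this forces $\delta_k < c/4$ for each $k$. The subsequence $(b_k)$ and auxiliary radii are then built inductively together with a strictly decreasing sequence $1/2 > \varepsilon_1 > \eta_1 \ge \varepsilon_2 > \eta_2 \ge \cdots$: starting from any $\varepsilon_1\in(0,1/2)$, once $b_1,\dots,b_{k-1}$ and $\varepsilon_k\in(0,1/2)$ have been chosen, apply Lemma~\ref{testfunctions}(iii) with $\varepsilon=\varepsilon_k$ to select $b_k$ far enough along $(a_j)$ that $\int_{\Sn\setminus S_{\varepsilon_k}(\omega)}|J_b f_{b_k}|^p\,d\sigma < \delta_k^p/2$; then, with $b_k$ now fixed, apply Lemma~\ref{testfunctions}(iv) to pick $\eta_k\in(0,\varepsilon_k)$ with $\int_{S_{\eta_k}(\omega)}|J_b f_{b_k}|^p\,d\sigma < \delta_k^p/2$, and set $\varepsilon_{k+1}=\eta_k$. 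Putting $A_k = S_{\varepsilon_k}(\omega)\setminus S_{\eta_k}(\omega)$, the sets $A_k$ are pairwise disjoint Borel subsets of $\Sn$ because $\varepsilon_{k+1}=\eta_k$; writing $h_k = (J_b f_{b_k})\chi_{A_k}$ and $e_k = (J_b f_{b_k}) - h_k$ for the boundary functions on $\Sn$, the two estimates above give $\|e_k\|_{L^p(\Sn)} < \delta_k$, hence $\|h_k\|_{L^p(\Sn)} \ge \|J_b f_{b_k}\|_{H^p} - \delta_k \ge c/2 - c/4 = c/4$, while trivially $\|h_k\|_{L^p(\Sn)}\le C_0$.

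Using the isometric embedding $H^p \hookrightarrow L^p(\Sn)$ via boundary values, for every finitely supported scalar sequence $\alpha=(\alpha_k)$ disjointness of the $A_k$ yields
\[
\Big\|\sum_k \alpha_k h_k\Big\|_{L^p(\Sn)} = \Big(\sum_k |\alpha_k|^p \|h_k\|_{L^p(\Sn)}^p\Big)^{1/p},
\]
and since $\big\|\sum_k \alpha_k e_k\big\|_{L^p(\Sn)} \le \sum_k |\alpha_k|\,\delta_k \le \|\alpha\|_{\ell^p}\sum_k\delta_k$, the triangle inequality gives
\[
\Big(\tfrac{c}{4} - \sum_k\delta_k\Big)\|\alpha\|_{\ell^p} \;\le\; \Big\|\sum_k \alpha_k J_b f_{b_k}\Big\|_{H^p} \;\le\; \Big(C_0 + \sum_k\delta_k\Big)\|\alpha\|_{\ell^p}.
\]
Because $\sum_k\delta_k < c/8$, the lower bound is at least $(c/8)\|\alpha\|_{\ell^p}$. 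Applying the same two inequalities to tails $\sum_{k\ge m}\alpha_k J_b f_{b_k}$ shows that the partial sums form a Cauchy sequence in $H^p$ for every $\alpha\in\ell^p$, so $U$ is well defined on all of $\ell^p$, bounded, and bounded below; thus $U$ is an isomorphism onto its range $U(\ell^p)=J_b(V(\ell^p))$.

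I do not expect a genuinely hard obstacle here: the analytic content has already been isolated in Lemma~\ref{testfunctions} and Proposition~\ref{normlimit}. The only points needing care are bookkeeping ones — carrying out the inductive selection in the correct order (fix $\varepsilon_k$ first, then choose $b_k$ from (iii), then choose $\eta_k$ from (iv) with $b_k$ already fixed) so that the annuli $A_k$ really shrink toward $\omega$ and stay pairwise disjoint, and performing all the disjoint-support estimates in $L^p(\Sn)$ rather than in $H^p$, since the truncations $h_k$ and the errors $e_k$ need not be boundary values of $H^p$-functions.
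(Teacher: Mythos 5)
Your proposal is correct and follows essentially the same route as the paper: the paper simply defers to the one-dimensional argument of \cite[Prop.~3.5]{M1}, which is exactly this gliding-hump construction of almost disjointly supported boundary functions via parts (iii) and (iv) of Lemma~\ref{testfunctions}, followed by the small-perturbation estimate in $L^p(\Sn)$. Your bookkeeping (order of the inductive choices, disjointness of the annuli $A_k$, and working with boundary values rather than in $H^p$) is sound.
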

\begin{proof}
With the use of Propositions \ref{bddmap}, \ref{normlimit} and Lemma \ref{testfunctions}, we just need to follow the argument given in the one-dimensional case, see \cite[Prop. 3.5]{M1}. We omit the details.
\end{proof}

\begin{proof}[\textbf{Proof of Theorem \ref{T_gstrictsing}}]
By Proposition \ref{bddmap} and Proposition \ref{isomorphism}, we can choose a sequence $(b_k) \subset \B^n$ that  induces a bounded operator $V \colon \ell^p \to H^p,$ given by   $$V(\alpha) = \sum_{k = 1}^\infty \alpha_k f_{b_k},$$ where $\alpha = (\alpha_k) \in \ell^p,$ and an isomorphism $U\colon \ell^p \to H^p,\, U = J_bV$
onto its range.

Define $M = \overline{\textup{span}\{f_{b_k}\}},$ where the closure is taken in $H^p.$
Since $U$ is bounded below, we have that the restriction $J_b|_M$ is also bounded  below. Thus $J_b|_M\colon M \to J_b(M)$ is an isomorphism and consequently $M$ is isomorphic to $\ell^p.$ In particular, the operator $J_b$ is not $\ell^p$-singular.
\end{proof}

\section{$\ell^2$-singularity of $J_b$}\label{s3}

In this section, we show that if  $J_b\colon H^p \to H^p$ is bounded below on a closed infinite-dimensional subspace $M$ of $H^p$, then there exists a subspace $N \subset M$ isomorphic to $\ell^p.$ In particular, this implies that $J_b\colon H^p \to H^p$ cannot fix an isomorphic copy of $\ell^2$ whenever $p \ne 2.$

For $\zeta \in \Sn$, the admissible approach region $\Gamma(\zeta)$ is defined as
\begin{displaymath}
\Gamma(\zeta)=\left \{z\in \Bn: |1-\langle z,\zeta\rangle |< 1-|z|^2 \right \}.
\end{displaymath}
If $I(z):=\{\zeta \in \Sn: z\in \Gamma(\zeta)\}$, then $\sigma(I(z))\asymp (1-|z|^2)^{n}$, and it follows from Fubini's theorem that, for a finite positive measure $\nu$, and a positive function $\varphi$, one has
\begin{equation}\label{EqG}
\int_{\Bn} \varphi(z)\,d\nu(z)\asymp \int_{\Sn} \left (\int_{\Gamma(\zeta)} \varphi(z) \frac{d\nu(z)}{(1-|z|^2)^{n}} \right )d\sigma(\zeta).
\end{equation}

For convenience, we define the measure $\mu_b$ by $$d\mu_b(z)=|Rb(z)|^2(1-|z|^2)dv(z),$$ where $dv$ is the normalized volume measure on $\Bn$,
and set $dV_\alpha(z)=(1-|z|^2)^\alpha dv(z)$. It is well known that a holomorphic function $b$ on $\Bn$ belongs to $BMOA(\Bn)$ if and only if $\mu_ b$ is a Carleson measure; and $b\in VMOA(\Bn)$ if and only if $\mu_ b$ is a vanishing Carleson measure. We recall that a positive Borel measure $\mu$ on $\Bn$ is a Carleson measure if there exists a constant $C>0$ such that
\[\mu \big (B_{\zeta}(\delta)\big )\le C \delta \, ^ n\]
for all $\zeta \in \Sn$ and $\delta>0$. Here $B_\zeta(\delta) = \{z \in \mathbb{B}^n: |1- \langle z, \zeta \rangle| < \delta\}$. Also, $\mu$ is a vanishing Carleson measure if
\[
\mu(B_\zeta(\delta))=o(\delta^n),\qquad \textrm {as }\delta \to 0.
\]
It is also well known that, if $\mu$ is a vanishing Carleson measure then
\[
\int_{\Bn} |f_ k|^p \,d\mu \rightarrow 0
\]
for any bounded sequence of functions $\{f_k\} \subset H^p$ converging to zero uniformly on compact subsets of $\Bn$, $1\le p<\infty$.
Next, we establish some preliminary results en route to the proof of Theorem \ref{mt2}.

\begin{lemma}
\label{le_vcarleson}
Let $\varepsilon  >0$ and $b \in H^2.$ Then there exists a compact set $K_\varepsilon \subset \mathbb{S}^n$ with $\sigma(\mathbb{S}^n\setminus K_\varepsilon) < \varepsilon$ such that $\sup_{\zeta \in K_\varepsilon}\mu_b(B_\zeta(\delta))=o(\delta^n)$ as $\delta \to 0,$
and $\mu_{b,\varepsilon} = \chi_{\Omega_\varepsilon}|Rb|^2\, dV_1$ is a vanishing Carleson measure, where $\Omega_\varepsilon = \bigcup_{\zeta \in K_\varepsilon} \Gamma(\zeta)$.
\end{lemma}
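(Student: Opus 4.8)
The plan is to extract the compact set $K_\varepsilon$ from an Egorov-type argument applied to the measure $\mu_b$, which is a Carleson measure since $b \in H^2 \subset BMOA$ need not hold — wait, actually $b \in H^2$ only, so one should be careful: the relevant fact is that for $b \in H^2$ the function $\zeta \mapsto \int_{\Gamma(\zeta)} |Rb(z)|^2 (1-|z|^2)^{1-n}\,dv(z)$, which is the admissible area integral $A^2(b)(\zeta)$, is finite for a.e.\ $\zeta$ and lies in $L^1(\Sn)$ by the area theorem, $\|A(b)\|_{L^2(\Sn)}^2 \asymp \|b\|_{H^2}^2$ (after normalizing $b(0)=0$). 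First I would record, via \eqref{EqG} with $d\nu = d\mu_b$ and $\varphi \equiv 1$ on a Carleson box, that for each fixed $\delta$ the quantity $\mu_b(B_\zeta(\delta))$ can be controlled by a truncated area integral; more precisely, the tail contribution $\int_{\Gamma(\zeta) \cap \{|1-\langle z,\zeta\rangle| < \delta\}} |Rb|^2(1-|z|^2)^{1-n}\,dv$ tends to $0$ as $\delta \to 0$ for every $\zeta$ with $A^2(b)(\zeta) < \infty$, by dominated convergence. So pointwise a.e.\ one has the desired $o(\delta^n)$ behaviour; the issue is uniformity in $\zeta$.

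Next I would upgrade this pointwise convergence to uniform convergence off a small set. Define $g_\delta(\zeta) = \delta^{-n}\mu_b(B_\zeta(\delta))$ — or better, a slightly larger comparable quantity built from the truncated area integral so that $g_\delta$ is decreasing in $\delta$ — and observe $g_\delta(\zeta) \to 0$ a.e.\ as $\delta \to 0$. By Egorov's theorem there is a measurable set $E_\varepsilon$ with $\sigma(\Sn \setminus E_\varepsilon) < \varepsilon/2$ on which $g_\delta \to 0$ uniformly; shrinking $E_\varepsilon$ to a compact subset $K_\varepsilon$ (inner regularity of $\sigma$) with $\sigma(\Sn \setminus K_\varepsilon) < \varepsilon$ preserves the uniform convergence, giving $\sup_{\zeta \in K_\varepsilon} \mu_b(B_\zeta(\delta)) = o(\delta^n)$.

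For the second assertion, set $\Omega_\varepsilon = \bigcup_{\zeta \in K_\varepsilon}\Gamma(\zeta)$ and $\mu_{b,\varepsilon} = \chi_{\Omega_\varepsilon}|Rb|^2\,dV_1$. I would estimate $\mu_{b,\varepsilon}(B_\eta(\delta))$ for arbitrary $\eta \in \Sn$. The point is that if $z \in \Omega_\varepsilon \cap B_\eta(\delta)$, then $z \in \Gamma(\zeta)$ for some $\zeta \in K_\varepsilon$, and since $z \in \Gamma(\zeta)$ forces $|1-\langle z,\zeta\rangle| < 1 - |z|^2$, while $z \in B_\eta(\delta)$ gives $1 - |z|^2 \lesssim \delta$, the sphere points $\zeta$ and $\eta$ are non-isotropically $O(\delta)$-close; hence $z$ lies in $\Gamma(\zeta) \cap B_\zeta(C\delta)$ for that $\zeta \in K_\varepsilon$, and by the standard geometry $B_\eta(\delta) \cap \Omega_\varepsilon$ is covered by boundedly many (in fact one, up to constants) translated tents over $K_\varepsilon$. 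Therefore $\mu_{b,\varepsilon}(B_\eta(\delta)) \lesssim \sup_{\zeta \in K_\varepsilon}\mu_b(B_\zeta(C\delta)) = o(\delta^n)$, uniformly in $\eta$, which is exactly the vanishing Carleson condition. I expect the main obstacle to be the last geometric step: making precise the comparison between the tent $\Gamma(\zeta)$ truncated at height $\delta$ and the Carleson box $B_\zeta(C\delta)$, and checking that membership of $z$ in $\Omega_\varepsilon \cap B_\eta(\delta)$ really does pin $z$ inside a controlled box based at a point of $K_\varepsilon$ — this uses the quasi-triangle inequality for $|1-\langle \cdot,\cdot\rangle|^{1/2}$ on $\bar{\Bn}$ and the fact that points in $\Gamma(\zeta)$ near the boundary are non-tangentially close to $\zeta$. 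The rest is Egorov plus routine measure theory.
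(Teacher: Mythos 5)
Your second half --- the geometric step showing that $B_\eta(\delta)\cap\Omega_\varepsilon\neq\emptyset$ forces $B_\eta(\delta)\subset B_x(C\delta)$ for some $x\in K_\varepsilon$, so that the vanishing Carleson property of $\mu_{b,\varepsilon}$ follows from the uniform $o(\delta^n)$ bound on $K_\varepsilon$ --- is exactly the paper's argument (carried out with the genuine triangle inequality for $d(z,w)=|1-\langle z,w\rangle|^{1/2}$), and is fine. The gap is in the first half. You reduce everything to the claim that $\delta^{-n}\mu_b(B_\zeta(\delta))\to 0$ for a.e.\ $\zeta$ and justify it by dominated convergence applied to the truncated area integral $\int_{\Gamma(\zeta)\cap\{|1-\langle z,\zeta\rangle|<\delta\}}|Rb|^2\,dV_{1-n}$. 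But the Carleson box $B_\zeta(\delta)$ is not contained in the cone $\Gamma(\zeta)$: by \eqref{EqG}, $\mu_b(B_\zeta(\delta))$ is comparable to the integral over $\xi$ in the boundary ball $I(\zeta,C\delta)=\{\xi\in\Sn:|1-\langle\xi,\zeta\rangle|<C\delta\}$ of the truncated area integrals based at $\xi$, so $\delta^{-n}\mu_b(B_\zeta(\delta))$ is an \emph{average} of these tails over a shrinking ball around $\zeta$, not the tail at $\zeta$ itself. Smallness of the tail at the single point $\zeta$ gives no control of that average (the $L^1$ function $\xi\mapsto A^2(b)(\xi)$ can spike arbitrarily close to $\zeta$), so the sentence ``observe $g_\delta(\zeta)\to0$ a.e.'' is precisely the nontrivial content of the lemma, not an observation.

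This is where the paper does the real work: it projects $\mu_b$ restricted to the annuli $S_k=\{1-1/k<|z|<1\}$ to boundary measures $\nu_k$ with total mass $\nu_k(\Sn)=\mu_b(S_k)\to0$, applies the weak-type $(1,1)$ maximal inequality to get $\nu_k^*\to 0$ a.e., and uses $B_\zeta(\delta)\subset S_k$ for $\delta<1/k$ to dominate $\sup_{0<\delta<1/k}\delta^{-n}\mu_b(B_\zeta(\delta))$ by $\nu_k^*(\zeta)$; Egorov then gives the uniformity. Your plan can be repaired along these lines (or via Lebesgue differentiation applied to the increasing family of tail functions, which is the same maximal-function input in disguise), but as written the central analytic step is asserted rather than proved. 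A further minor point: the family $\delta\mapsto\delta^{-n}\mu_b(B_\zeta(\delta))$ is not monotone in $\delta$, so before invoking Egorov you must pass to $\sup_{\delta'\le\delta}$ of it --- or work with the sequence $\nu_k^*$ as the paper does --- and check measurability; your parenthetical acknowledges this but does not supply the comparable monotone majorant.
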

\begin{proof}
For each $k \ge 1$, let $\nu_k$ be the projection to  $\mathbb{S}^n$ of the measure $\mu_b$
restricted to the annulus $S_k = \{z \in \mathbb{B}^n : 1-1/k
< |z| < 1\}.$ That is, $\nu_k$ is determined by the
condition
\[
 \nu_k(I(\zeta, \delta))= \mu_b\big (\{z \in S_k: |1-\langle z, \zeta \rangle|< \delta\}\big),
\]
  where $I(\zeta,\delta) = \{\xi\in\mathbb S^n : |1-\langle \xi, \zeta \rangle|<\delta\}.$ Consider the Hardy-Littlewood maximal function of $\nu_k:$ $$\nu_k^*(\zeta) = \sup_{\delta > 0}\frac{\nu_k(I(\zeta, \delta))}{\sigma(I(\zeta, \delta))} \asymp \sup_{\delta > 0}\frac{\nu_k(I(\zeta, \delta))}{\delta^n}.$$  The maximal function theorem \cite[Chapter 4]{ZhuBn} implies that it satisfies
\begin{equation}
\label{eq: estimate1}
\sigma(\{\zeta \in \mathbb{S}^n: \nu_k^*(\zeta)  >\lambda\}) \lesssim \frac{\nu_k(\mathbb{S}^n)}{\lambda}
\end{equation}
 for all $\lambda > 0$. Since $\mu_b$ is a finite measure (by the Littlewood-Paley identity we have $\mu_ b(\Bn)\asymp \|b\|_{H^2}^2$), by the absolute continuity of the integral, we deduce that $\nu_k(\mathbb{S}^n) = \mu_b(S_k) \to 0$ as
$k \to \infty$. Hence $\nu_k^* \to 0$  almost everywhere on $\mathbb{S}^n$ as $k \to \infty$ by \eqref{eq: estimate1}.
%%%%%%Since $(\nu_k^*)$ is obviously a pointwise decreasing sequence, we would otherwise find a constant $\lambda  >0$ and a set $E \subset \mathbb{S}^n$ of positive measure such that $\nu_k^*(\zeta) > \lambda$ for all $\zeta \in E$ and $k \ge 1$. This would be in contradiction with \eqref{eq: estimate}. Thereby $\nu_k^* \to 0$ a.e. on $\mathbb{S}^n$.\Sp_n
Egorov's theorem now implies that there is a set $F \subset \mathbb{S}^n$ with $\sigma(\mathbb{S}^n \setminus F) < \varepsilon/2$ such that $\nu_k^* \to 0$ uniformly in $F$ as $k \to \infty$. Now for every $k \ge 1$ and $\zeta \in F$, we have
\[
\sup_{0 < \delta < 1/k}\frac{\mu_b(B_\zeta(\delta))}{\delta^n} \le \sup_{\delta  > 0}\frac{\nu_k(I(\zeta,\delta))}{\delta^n} \asymp \nu_k^*(\zeta),
\]
where the first inequality follows from the fact that $B_\zeta(\delta) \subset S_k$ for all $0 < \delta < 1/k.$ Thereby we deduce that $\sup_{\zeta \in F} \mu_b(B_\zeta(\delta)) = o(\delta^n$) as $\delta \to 0$. Hence, if we pick a compact subset $K_\varepsilon \subset F$ with $\sigma(F \setminus K_\varepsilon) < \varepsilon/2,$ we get
\begin{equation}\label{CMKe}
\sup_{\zeta \in K_\varepsilon}\mu_b(B_\zeta(\delta))=o(\delta^n),\qquad \textrm{as }\delta \to 0.
\end{equation}
In order to see that $\mu_{b,\varepsilon}$ is a vanishing Carleson measure, we need to prove that $\mu_{b,\varepsilon}(B_{\zeta}(\delta))=o(\delta^n)$ as $\delta \to 0$ for every $\zeta \in \Sn$.
From \eqref{CMKe}, we obtain that $\sup_{\zeta \in K_\varepsilon} \mu_{b, \varepsilon}(B_\zeta(\delta)) = o(\delta^n$) as $\delta \to 0$. Let $\zeta \in \mathbb{S}^n \setminus K_\varepsilon$ and $\delta  > 0$ small enough. If $B_\zeta(\delta) \cap \Omega_\varepsilon \neq  \emptyset$, then there is a point $w\in B_{\zeta}(\delta)\cap \Gamma(x)$ for some $x\in K_{\varepsilon}$. Using that $d(z,w)=|1-\langle z,w \rangle |^{1/2}$ satisfies the triangle inequality \cite[Proposition 5.1.2]{Rud}, for $z\in B_{\zeta}(\delta)$, we have
\[
\begin{split}
|1-\langle z,x \rangle |^{1/2}&\le |1-\langle z,w \rangle |^{1/2}+|1-\langle w,x \rangle |^{1/2}
\\
& \le |1-\langle z,\zeta \rangle |^{1/2}+|1-\langle \zeta,w \rangle |^{1/2}+|1-\langle w,x \rangle |^{1/2}
\\
&\le 2\delta^{1/2}+(1-|w|^2)^{1/2}
\\
&\le 2\delta^{1/2}+\sqrt{2}\,|1-\langle \zeta,w \rangle |^{1/2}<4 \delta^{1/2}.
\end{split}
\]
Hence $B_{\zeta}(\delta)\subset B_ x \big (16\delta \big)$, and therefore
\[
\mu _{b,\varepsilon}\big (B_{\zeta}(\delta) \big )\le \mu_{b,\varepsilon} \big (B_ x(16\delta)\big )=o(\delta^n),\qquad \textrm{as }\delta \to 0.
\]
It now follows that the measure  $\mu_{b, \varepsilon}$ is a vanishing Carleson measure.
\end{proof}

For $1\le p<\infty$ and a sequence of functions  $\{f_ k\}\subset H^p$, given a subset $A$ of $\Sn$, we consider the quantities
\begin{align*}
A(j,k)&=\int_{A}\left(\int_{\Gamma(\xi)}|J_b f_j|^{p-2}|f_k|^2 |Rb|^2 dV_{1-n}\right) d\sigma(\xi); \\
A(\infty,k) &= \sup_{\|f\|_{H^p}=1} \int_{A}\left(\int_{\Gamma(\xi)}|J_b f|^{p-2}|f_k|^2 |Rb|^2 dV_{1-n}\right) d\sigma(\xi);
\\
A(k, \infty) &= \sup_{\|f\|_{H^p}=1} \int_{A}\left(\int_{\Gamma(\xi)}|J_b f_k|^{p-2}|f|^2 |Rb|^2 dV_{1-n}\right) d\sigma(\xi).
\end{align*}
\begin{lemma}
\label{le_functionals}
Let $b \in BMOA(\Bn)$,
$0 < \delta < 1$ and
$\{f_k\}$ be a normalized sequence in $H^p$, which converges to zero uniformly on compact subsets of $\Bn.$ If $p>2$,
there exists a subsequence denoted still by $\{f_k\}$, a decreasing sequence $\varepsilon_m > 0, \, \varepsilon_m \to 0,$ and compact sets $K_m\subset \Sn$ satisfying $K_m\subset K_{m+1}$ and $\sigma(E_m) < \varepsilon_m$, where $E_m = \mathbb{S}^n \setminus K_m$ such that
\begin{align*}
&E_m(j,k)\lesssim \delta^2 4^{-j-k-m}\ \ \ for\ j,k<m;\\
&E_m(\infty,k)\lesssim \delta^2 4^{-k-2m}\ \ \ for\ k<m;\\
&E_m(k,\infty)\lesssim \delta^2 4^{-k-2m}\ \ \ for\ k<m;\\
&K_m(k,m)\lesssim \delta^2 4^{-k-2m}\ \ \ for\ k\leq m;\\
&K_m(m,k)\lesssim \delta^2 4^{-k-2m}\ \ \ for\ k\leq m,
\end{align*}
for all $m\geq1$.
In particular, by defining $\tilde{E}_m = E_m \setminus E_{m+1}$,
we have that $\tilde{E}_m(j,k)\lesssim \delta^2 4^{-j-k-m}$for $k \ne m$ or $j \ne m$.
\end{lemma}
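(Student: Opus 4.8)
Here is how I would approach the proof.

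The plan is to build, by a single interleaved extraction, a subsequence of $\{f_k\}$ (still written $\{f_k\}$), a decreasing sequence $\varepsilon_m>0$ with $\varepsilon_m\to0$, and an increasing sequence of compact sets $K_m\subset\Sn$ with $\sigma(\Sn\setminus K_m)<\varepsilon_m$ (so $E_m=\Sn\setminus K_m$ decreases to a null set), arranged so that at the $m$-th stage only \emph{finitely many} of the required inequalities need be secured. Three standing facts will be used throughout. First, since $b\in BMOA(\Bn)$, the measure $\mu_b$ is Carleson, so $\int_{\Bn}|g|^p\,d\mu_b\lesssim\|g\|_{H^p}^p$ for $g\in H^p$, and $J_b$ is bounded on $H^p$; in particular $\sup_k\|J_bf_k\|_{H^p}<\infty$. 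Second, because $R(J_bf)=f\,Rb$, the sequence $\{J_bf_k\}$ is bounded in $H^p$ and, like $\{f_k\}$, converges to $0$ uniformly on compact subsets of $\Bn$, so $\int_{\Bn}|f_k|^p\,d\mu\to0$ and $\int_{\Bn}|J_bf_k|^p\,d\mu\to0$ for every \emph{vanishing} Carleson measure $\mu$. Third, by \eqref{EqG} and $\sigma(I(z))\asymp(1-|z|^2)^n$, for every Borel $A\subset\Sn$ and all indices $i,l$,
\[
A(i,l)\;\asymp\;\int_{\Bn}|J_bf_i|^{p-2}|f_l|^2\,\frac{\sigma(I(z)\cap A)}{(1-|z|^2)^n}\,d\mu_b(z),
\]
whence, taking $A=K_m$ and using $\sigma(I(z)\cap K_m)\lesssim(1-|z|^2)^n\chi_{\Omega_{\varepsilon_m}}(z)$, one gets $K_m(i,l)\lesssim\int_{\Bn}|J_bf_i|^{p-2}|f_l|^2\,d\mu_{b,\varepsilon_m}$, where $\mu_{b,\varepsilon_m}=\chi_{\Omega_{\varepsilon_m}}|Rb|^2\,dV_1$ and $\Omega_{\varepsilon_m}=\bigcup_{\xi\in K_m}\Gamma(\xi)$.

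The key estimates come from applying Hölder's inequality with the conjugate exponents $\tfrac p{p-2}$ and $\tfrac p2$ (this is where $p>2$ enters) \emph{inside the cone integral}. Writing $\mathcal{A}_g(\xi):=\int_{\Gamma(\xi)}|g|^p|Rb|^2\,dV_{1-n}$ for $g\in H^p$, one obtains, for any Borel $A\subset\Sn$,
\[
A(j,k)\;\le\;\Big(\int_{A}\mathcal{A}_{J_bf_j}\,d\sigma\Big)^{\frac{p-2}{p}}\Big(\int_{A}\mathcal{A}_{f_k}\,d\sigma\Big)^{\frac2p},
\]
and the analogous bounds for $A(\infty,k)$ and $A(k,\infty)$ after the supremum over $f$. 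The crucial observation is that $\mathcal{A}_g\in L^1(\Sn)$ for each fixed $g\in H^p$, since $\int_\Sn\mathcal{A}_g\,d\sigma\asymp\int_{\Bn}|g|^p\,d\mu_b\lesssim\|g\|_{H^p}^p<\infty$, while the factor attached to the supremum variable (or to no lower index) stays uniformly $\lesssim1$, because $\sup_{\|f\|_{H^p}=1}\int_{\Bn}|J_bf|^p\,d\mu_b<\infty$ and $\int_{\Bn}|f|^p\,d\mu_b\lesssim\|f\|_{H^p}^p$. Hence, with $A=E_m$, each of $E_m(j,k)$, $E_m(\infty,k)$, $E_m(k,\infty)$ (for $j,k<m$) is bounded by a fixed power of $\int_{E_m}(\text{a fixed }L^1(\Sn)\text{ density})\,d\sigma$, which $\to0$ as $\sigma(E_m)\to0$ by absolute continuity of the integral. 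And, taking $A=K_m$, the third standing fact together with Hölder gives $K_m(k,m)\lesssim\big(\int_{\Bn}|f_m|^p\,d\mu_{b,\varepsilon_m}\big)^{2/p}$ and $K_m(m,k)\lesssim\big(\int_{\Bn}|J_bf_m|^p\,d\mu_{b,\varepsilon_m}\big)^{(p-2)/p}$, uniformly in $k\le m$ (the complementary factors being $\lesssim\int_{\Bn}|J_bf_k|^p\,d\mu_b\lesssim1$, resp.\ $\lesssim\int_{\Bn}|f_k|^p\,d\mu_b\lesssim1$); provided $\mu_{b,\varepsilon_m}$ is a vanishing Carleson measure, these tend to $0$ as $f_m$ is pushed out along the sequence.

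These two mechanisms run the extraction. At stage $m$ one first fixes the set: applying Lemma \ref{le_vcarleson} with a sufficiently small parameter and replacing the compact set it produces by its union with $K_{m-1}$ yields a compact $K_m\supseteq K_{m-1}$ with $\sigma(\Sn\setminus K_m)$ as small as desired; since a finite union of vanishing Carleson measures is again vanishing Carleson and any positive measure dominated by one is too, $\mu_{b,\varepsilon_m}$ is a vanishing Carleson measure. One shrinks $\sigma(E_m)$—using the finitely many already-fixed $L^1(\Sn)$ densities built from $f_1,\dots,f_{m-1}$—so that $E_m(j,k)\lesssim\delta^2 4^{-j-k-m}$ for $j,k<m$ and $E_m(\infty,k),E_m(k,\infty)\lesssim\delta^2 4^{-k-2m}$ for $k<m$, keeping $\sigma(E_m)\to0$. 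Then, with $\mu_{b,\varepsilon_m}$ fixed, one picks $f_m$ far enough out that $\big(\int|f_m|^p\,d\mu_{b,\varepsilon_m}\big)^{2/p}$ and $\big(\int|J_bf_m|^p\,d\mu_{b,\varepsilon_m}\big)^{(p-2)/p}$ are small enough to force $K_m(k,m),K_m(m,k)\lesssim\delta^2 4^{-3m}\le\delta^2 4^{-k-2m}$ for all $k\le m$; nothing selected before stage $m$ involves $f_m$, so no earlier estimate is disturbed. Finally the assertion about $\tilde E_m=E_m\setminus E_{m+1}=K_{m+1}\setminus K_m$ is immediate from monotonicity of the regions: if $(j,k)\ne(m,m)$, then either $j,k<m$ and $\tilde E_m(j,k)\le E_m(j,k)$; or exactly one index is $m$, say $j=m>k$, and $\tilde E_m(m,k)\le E_m(m,k)\le E_m(\infty,k)\lesssim\delta^2 4^{-k-2m}=\delta^2 4^{-j-k-m}$; or $m'=\max(j,k)>m$, in which case $\tilde E_m\subset K_{m+1}\subset K_{m'}$, so $\tilde E_m(j,k)\le K_{m'}(j,k)\lesssim\delta^2 4^{-\min(j,k)-2m'}\le\delta^2 4^{-j-k-m}$.

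The main obstacle is twofold. First, the bookkeeping of the interleaved extraction: $K_m$—hence $\varepsilon_m$ and $\mu_{b,\varepsilon_m}$—must be chosen \emph{after} $f_1,\dots,f_{m-1}$ but \emph{before} $f_m$, and the nesting $K_m\supseteq K_{m-1}$ must stay compatible with preserving the vanishing-Carleson property of $\mu_{b,\varepsilon_m}$, which is exactly why one takes unions and re-invokes Lemma \ref{le_vcarleson} at each stage. Second, and more delicate, one must use the \emph{right} form of Hölder's inequality for the two terms carrying a supremum, $E_m(\infty,k)$ and $E_m(k,\infty)$: Hölder has to be carried out with exponent $p$ inside each cone integral (equivalently, one must not crudely bound $\sigma(I(z)\cap E_m)$ in both Hölder factors at the Fubini step), so that the localization to $E_m$ is absorbed entirely into the factor built from the fixed function $f_k$ (resp.\ $J_bf_k$)—which then is the $E_m$-integral of a fixed $L^1(\Sn)$ density and yields to absolute continuity—while the other factor is bounded once and for all by the Carleson embedding and the boundedness of $J_b$. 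A splitting that keeps $|J_bf|^{p-2}$ or $|f|^2$ intact leaves a density depending on the supremum variable whose integrals over $E_m$ are only bounded, not small as $\sigma(E_m)\to0$, and the argument fails to close.
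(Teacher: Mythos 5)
Your proof is correct, and its overall architecture coincides with the paper's: Lemma \ref{le_vcarleson} supplies the nested compacts $K_m$ with $\mu_{b,\varepsilon_m}$ a vanishing Carleson measure; absolute continuity of fixed $L^1(\Sn)$ densities handles the $E_m$-estimates for already-chosen indices; the vanishing Carleson property handles the $K_m$-estimates as $f_m$ is pushed out; the extraction interleaves the choices of $\varepsilon_m,K_m$ and $f_m$; and the final claim for $\tilde E_m$ follows from the same three-case analysis ($j,k<m$; exactly one index equal to $m$; $\max(j,k)>m$ via $\tilde E_m\subset K_{\max(j,k)}$). The one place where you genuinely diverge is the treatment of the supremum quantities $E_m(\infty,k)$ and $E_m(k,\infty)$: you apply H\"older twice, first inside the cone integral with exponents $p/(p-2)$ and $p/2$ against $|Rb|^2\,dV_{1-n}$ and then over $E_m$, reducing everything to the densities $\xi\mapsto\int_{\Gamma(\xi)}|g|^p|Rb|^2\,dV_{1-n}$, whose total mass is $\asymp\int_{\Bn}|g|^p\,d\mu_b$; the factor carrying the supremum variable is then bounded once and for all by the Carleson embedding and the boundedness of $J_b$, while the factor built from the fixed $f_k$ (resp.\ $J_bf_k$) yields to absolute continuity. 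The paper instead peels off $|J_bf|^{p-2}$ with the admissible maximal function $(J_bf)^*$ and controls the remaining square-function factor $\big(\int_{\Gamma(\xi)}|g|^2|Rb|^2\,dV_{1-n}\big)^{p/2}$ by Calder\'on's area theorem ($\asymp\|J_bg\|_{H^p}^p$). Both splittings close the argument; yours is somewhat more self-contained for this lemma (no maximal function or area theorem needed here), at the cost of being less parallel to Lemmas \ref{le_functionals-bis}--\ref{bddbelU}, where the area-theorem normalization is used anyway. Your explicit justification of the nesting $K_{m-1}\subset K_m$ (take unions and note that a positive measure dominated by a finite sum of vanishing Carleson measures is again vanishing Carleson) is a small point the paper passes over in silence.
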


\begin{proof}Since $b \in BMOA(\Bn)$, the operator $J_ b$ is bounded on $H^p$. Also, Lemma 4.1 implies that for any $\varepsilon>0$, there exists a compact set $K_\varepsilon \subset \mathbb{S}^n$ with $\sigma(E_\varepsilon) < \varepsilon$ where $E_\varepsilon = \mathbb{S}^n \setminus K_\varepsilon$ such that
\begin{equation}
\label{eq: vcarleson}
\mu_{b,\varepsilon} = \chi_{\Omega_\varepsilon}|Rb|^2\, dV_1
\end{equation} is a vanishing Carleson measure. Note that, by \eqref{EqG} and H\"{o}lder's inequality with exponent $p/2>1$,
\[
\begin{split}
\int_{\Sn} & \left(\int_{\Gamma(\xi)}|J_b f_k|^{p-2} |f_m|^2 |Rb|^2 dV_{1-n}\right)d\sigma(\xi)
 \asymp  \int_{\Bn}|J_b f_k|^{p-2}|f_m|^2 d\mu_b
\\
&\leq \left[ \int_{\Bn} |J_bf_k|^p d\mu_b\right]^{\frac{p-2}{p}}\left[\int_{\Bn} |f_m|^p d\mu_b\right]^{\frac{2}{p}}
\lesssim \|J_ b f_ k\|_{H^p}^{p-2}\cdot \|f_ m\|_{H^p}^2<\infty.
\end{split}
\]
The last estimate is due to Carleson-H\"{o}rmander theorem on Carleson measures.
Therefore, by absolute continuity, for all fixed $(k,m) \in \mathbb{N}^2$, one has
\begin{equation}\label{eq: 3}
\lim_{\varepsilon\to 0} \int_{E_\varepsilon} \left(\int_{\Gamma(\xi)}|J_b f_k|^{p-2}|f_m|^2 |Rb|^2 dV_{1-n}\right)d\sigma(\xi)=0.
\end{equation}
As a simple application of \eqref{EqG}, for any positive measurable function $\varphi$, we have
\begin{equation}\label{eq: 4}
\int_{K_\varepsilon}\left(\int_{\Gamma(\xi)}\frac{\varphi(z)dv(z)}{(1-|z|^2)^n}\right) d\sigma(\xi)\lesssim \int_{\Omega_\varepsilon} \varphi(z)dv(z).
\end{equation}
By repeating the calculation above and using this formula, we obtain
\begin{equation}\label{eq: 5}
\lim_{(k,m)\to \infty}\int_{K_\varepsilon}\left(\int_{\Gamma(\xi)}|J_b f_k|^{p-2}|f_m|^2 |Rb|^2 dV_{1-n}\right)d\sigma(\xi)=0,
\end{equation}
which follows from the vanishing Carleson measure condition \eqref{eq: vcarleson}, where $(k,m)\to \infty$ means that $k+m\to \infty$.\\

For $f\in H^p$, the admissible maximal function $f^*(\zeta)=\sup_{z\in \Gamma(\zeta)}|f(z)|$ is bounded on $L^p(\Sn)$, that is, $\|f^*\|_{L^p(\Sn)}\lesssim \|f\|_{H^p}$ (see \cite[Chapter 4]{ZhuBn}). Assume now that $f,g \in H^p$ are unit vectors, then
\[
\begin{split}
\int_{E_\varepsilon} &\left(\int_{\Gamma(\xi)} |J_b f|^{p-2}|g|^2 |Rb|^2 dV_{1-n}\right) d\sigma(\xi)\\
&\lesssim  \int_{E_\varepsilon} (J_b f)^*(\xi)^{p-2} \left(\int_{\Gamma(\xi)} |g|^2 |Rb|^2 dV_{1-n}\right) d\sigma(\xi)\\
&\lesssim  \left(\int_{E_\varepsilon} (J_b f)^*(\xi)^p d\sigma(\xi)\right)^{\frac{p-2}{p}}\left(\int_{E_\varepsilon} \left(\int_{\Gamma(\xi)} |g|^2 |Rb|^2 dV_{1-n}\right)^{\frac{p}{2}} d\sigma(\xi)\right)^{\frac{2}{p}}.
\end{split}
\]
Observe that both factors go to zero as $\sigma(E_\varepsilon)\to 0$ due to the absolute continuity of the measure, as the boundedness of $J_ b$ gives $\|(J_ b f)^*\|_{L^p(\Sn)}\lesssim \|J_ b f \|_{H^p}\le \|J_ b\|$ and, by the version of Calder\'{o}n area theorem for the unit ball \cite{AB,Pau2016}, we have
\[
\int_{\Sn} \left(\int_{\Gamma(\xi)} |g|^2 |Rb|^2 dV_{1-n}\right)^{\frac{p}{2}} d\sigma(\xi)\asymp \|J_ b g \|^p_{H^p}\le \|J_ b\|^p.
\]
Hence, as $\varepsilon \rightarrow 0$, we have
\begin{equation}\label{Eq-k25}
E_{\varepsilon}(\infty,k) \lesssim  \|J_ b  \|^{p-2} \left(\int_{E_\varepsilon} \left(\int_{\Gamma(\xi)} |f_ k|^2 |Rb|^2 dV_{1-n}\right)^{\frac{p}{2}} d\sigma(\xi)\right)^{\frac{2}{p}} \longrightarrow 0.
\end{equation}
and
\begin{equation}\label{Eq-k50}
E_{\varepsilon}(k,\infty) \lesssim \left(\int_{E_\varepsilon} (J_b f_ k)^*(\xi)^p d\sigma(\xi)\right)^{\frac{p-2}{p}}\,  \|J_ b  \|^{2}  \longrightarrow 0.
\end{equation}
We will choose a subsequence of $(f_k)$, which we will also denote as $(f_k)$, and $\varepsilon_1 > \varepsilon_2 \dots >0$ in the following way.
Assume that functions $f_1,\ldots, f_{m-1}$, numbers $\varepsilon_1 > \ldots > \varepsilon_{m-1} > 0$ and compact sets $K_1 \subset \ldots \subset K_{m-1} $ are chosen for some $m \ge 2$.
Then \eqref{eq: 3} together with \eqref{Eq-k25} and \eqref{Eq-k50} yields that there exists $\varepsilon_m < \varepsilon_{m-1}$ with $K_{m-1}\subset K_{m}$ and  $\sigma(E_m) < \varepsilon_m$ such that
$E_m(j,k) \lesssim \delta^2 4^{-j-k-m}$ for every $j,k < m$, and $E_m(\infty,k)\lesssim \delta^2 4^{-k-2m}$, $E_m(k,\infty)\lesssim \delta^2 4^{-k-2m}$ for $k<m$.
After that we can use \eqref{eq: 5} to find $f_m$ such that
$K_m(k,m) \lesssim \delta^2 4^{-k-2m}$
and $K_m(m,k) \lesssim \delta^2 4^{-k-2m}$ for $k\leq m$.
%%%Then $ K_m(j,m) \le K_m(\infty,m) \textup{ for each $j \in \N$} $ and in particular $K_m(m,m)$ is small so that $E_m(m,m)$ is large. We also need and use the following estimates and notation. Denote $\tilde{E}_{m}= E_m\setminus E_{m+1}$ for each $m \in \N.$ We have that $\Sp_n = K_m \cup \tilde{E}_m \cup E_{m+1}$ and so integrals $\int_{K_m}\left(\int_{\Gamma(\xi)}|J_b f_m|^{p-2}|f_m|^2 |Rb|^2 dV_{1-n}\right) d\sigma(\xi)$ and $E_{m+1}(m,m)$ are small, so that the integral $\tilde{E}_m(m,m)$  has to be large, since $\inf_{k \in \mathbb{N}} \|J_b f_k\|_{H^p} \asymp 1$ (e.g.~larger than $\frac{d^p}{2}$ for all $m$). Now $\tilde{E}_m \subset K_{m+1} \subset K_k$ for $k > m$ and $j \in \N$.
Hence for $j,k<m$, we have
\[
\begin{split}
\tilde{E}_m(j,k)&=\int_{\tilde{E}_m}\left(\int_{\Gamma(\xi)}|J_b f_j|^{p-2}|f_k|^2 |Rb|^2 dV_{1-n}\right) d\sigma(\xi) \le
E_m(j,k) %\int_{K_k}\left(\int_{\Gamma(\xi)}|J_b f_j|^{p-2}|f_k|^2 |Rb|^2 dV_{1-n}\right) d\sigma(\xi)
\lesssim \delta^2 4^{-j-k-m}.
\end{split}
\]
For $k<m$, $\tilde{E}_m(m,k) \le E_m(\infty,k) \lesssim \delta^2 4^{-k-2m}$ and $\tilde{E}_m(k,m) \le E_m(k,\infty) \lesssim \delta^2 4^{-k-2m}$.

Also for the case $\max\{j,k\}>m$, if $j=\max\{j,k\}>m$, we may use $\tilde{E}_m= E_m\setminus E_{m+1}\subset\mathbb{S}^n\setminus E_{m+1}=K_{m+1}\subset K_j$ to obtain
\begin{align*}
 \tilde{E}_m(j,k)\leq K_j(j,k)\lesssim \delta^2 4^{-2j-k}\leq \delta^2 4^{-j-k-m}.
\end{align*}
Analogously, we have $\tilde{E}_m(j,k)\leq K_k(j,k)\lesssim \delta^2 4^{-j-k-m}$ for $k=\max\{j,k\}>m$.
Thus, $\tilde{E}_m(j,k)\lesssim \delta^2 4^{-j-k-m}$ for $k\neq m$ or $j\neq m$.
\end{proof}

Next, we establish an analogous version of Lemma \ref{le_functionals} in the case $1 \le p \le 2$.

\begin{lemma}
\label{le_functionals-bis}
 Let $1 \le p \le 2$,  $b \in BMOA(\Bn)$,
$0 < \delta < 1$, and
$\{f_k\}$ be a normalized sequence in $H^p$, which converges to zero uniformly on compact subsets of $\Bn.$ Then there exists a subsequence denoted still by $\{f_k\}$, a decreasing sequence $\varepsilon_m > 0, \, \varepsilon_m \to 0,$ and compact sets $K_m\subset \Sn$ satisfying $K_m\subset K_{m+1}$ and $\sigma(E_m) < \varepsilon_m$, where $E_m = \mathbb{S}^n \setminus K_m$ such that
$$L_m(m) = \int_{K_m}\left(\int_{\Gamma(\xi)}|f_m|^2 |Rb|^2 dV_{1-n}\right)^{\frac{p}{2}} d\sigma(\xi)$$ and $$F_m(k) =  \int_{E_m}\left(\int_{\Gamma(\xi)}|f_k|^2 |Rb|^2 dV_{1-n}\right)^{\frac{p}{2}} d\sigma(\xi)$$ satisfy $F_m(k) \lesssim \delta 4^{-k-m}$ for $k < m$ and $L_m(m) \lesssim \delta 4^{-2m}$ for all $m\geq1$. In particular, $$\tilde{F}_m(k)=\int_{\tilde{E}_m}\left(\int_{\Gamma(\xi)}|f_k|^2 |Rb|^2 dV_{1-n}\right)^{\frac{p}{2}} d\sigma(\xi) \lesssim \delta 4^{-k-m}$$ for $k \ne m$, where $\tilde{E}_ m=E_ m \setminus E_{m+1}$. Also, we have $\tilde{F}_m(m) \lesssim 1$.
\end{lemma}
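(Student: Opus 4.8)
The proof follows the inductive selection scheme of Lemma~\ref{le_functionals}, with the functionals $A(j,k)$ replaced by the simpler area-type quantities occurring in $L_m(m)$ and $F_m(k)$, and with the H\"older step adjusted to the range $p/2\le 1$. Throughout, abbreviate $A_b(g)(\xi)^2=\int_{\Gamma(\xi)}|g|^2|Rb|^2\,dV_{1-n}$, so that $L_m(m)=\int_{K_m}A_b(f_m)^p\,d\sigma$ and $F_m(k)=\int_{E_m}A_b(f_k)^p\,d\sigma$. Since $b\in\BMOA(\Bn)$, the operator $J_b$ is bounded on $H^p$, and the Calder\'on area theorem for the ball \cite{AB,Pau2016} gives $\int_{\Sn}A_b(g)^p\,d\sigma\asymp\|J_bg\|_{H^p}^p\le\|J_b\|^p\|g\|_{H^p}^p$; applied to the normalized sequence this yields $\int_{\Sn}A_b(f_k)^p\,d\sigma\lesssim1$ for all $k$, hence $\tilde F_m(m)\le\int_{\Sn}A_b(f_m)^p\,d\sigma\lesssim1$. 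Moreover, for each fixed $k$ the function $A_b(f_k)^p$ lies in $L^1(\sigma)$, so the measure $A_b(f_k)^p\,d\sigma$ is absolutely continuous with respect to $\sigma$ and $\int_E A_b(f_k)^p\,d\sigma\to0$ as $\sigma(E)\to0$.

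The inductive step mirrors that of Lemma~\ref{le_functionals}. Suppose $f_1,\dots,f_{m-1}$, numbers $\varepsilon_1>\dots>\varepsilon_{m-1}>0$ and compact sets $K_1\subset\dots\subset K_{m-1}$ have been chosen. Using absolute continuity of the finitely many measures $A_b(f_k)^p\,d\sigma$, $k<m$, pick $\varepsilon_m<\varepsilon_{m-1}$ so small that $\sigma(E)<\varepsilon_m$ forces $\int_E A_b(f_k)^p\,d\sigma\lesssim\delta\,4^{-k-m}$ for all $k<m$. Apply Lemma~\ref{le_vcarleson} with parameter $\varepsilon_m/2$ and replace the resulting compact set by its union with $K_{m-1}$; as in Lemma~\ref{le_functionals} this preserves nesting and, since condition \eqref{CMKe} passes to finite unions, it still yields a compact $K_m$ with $K_{m-1}\subset K_m$ and $\sigma(E_m)<\varepsilon_m$ for which $\mu_{b,\varepsilon_m}=\chi_{\Omega_{\varepsilon_m}}|Rb|^2\,dV_1$ is a vanishing Carleson measure, $\Omega_{\varepsilon_m}=\bigcup_{\zeta\in K_m}\Gamma(\zeta)$. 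By the choice of $\varepsilon_m$ we get $F_m(k)\lesssim\delta\,4^{-k-m}$ for $k<m$ immediately.

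To choose $f_m$, note that for $\xi\in K_m$ one has $\Gamma(\xi)\subset\Omega_{\varepsilon_m}$. Writing $|f_m(z)|^2=|f_m(z)|^{2-p}|f_m(z)|^p$ and using $2-p\ge0$ to bound $|f_m(z)|^{2-p}\le f_m^*(\xi)^{2-p}$ for $z\in\Gamma(\xi)$, where $f_m^*$ is the admissible maximal function, then raising to the power $p/2$, integrating over $K_m$, and applying H\"older's inequality with exponents $\tfrac{2}{2-p}$ and $\tfrac{2}{p}$ together with $\|f_m^*\|_{L^p(\Sn)}\lesssim\|f_m\|_{H^p}=1$ and \eqref{eq: 4} applied to $\varphi=|f_m|^p|Rb|^2(1-|z|^2)$, we obtain
\[
L_m(m)\le\Big(\int_{K_m}f_m^*(\xi)^p\,d\sigma(\xi)\Big)^{\frac{2-p}{2}}\Big(\int_{K_m}\int_{\Gamma(\xi)}|f_m|^p\,|Rb|^2\,dV_{1-n}\,d\sigma(\xi)\Big)^{\frac{p}{2}}\lesssim\Big(\int_{\Bn}|f_m|^p\,d\mu_{b,\varepsilon_m}\Big)^{\frac{p}{2}}.
\]
Since $\mu_{b,\varepsilon_m}$ is a fixed vanishing Carleson measure and the current subsequence is bounded in $H^p$ and converges to zero uniformly on compact subsets of $\Bn$, the right-hand side tends to $0$ along that subsequence; choosing $f_m$ far enough along it gives $L_m(m)\lesssim\delta\,4^{-2m}$. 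This completes the induction.

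The statements about $\tilde F_m$ are now formal. For $k<m$, $\tilde E_m\subset E_m$ gives $\tilde F_m(k)\le F_m(k)\lesssim\delta\,4^{-k-m}$; for $k>m$, $\tilde E_m=K_{m+1}\setminus K_m\subset K_k$ gives $\tilde F_m(k)\le L_k(k)\lesssim\delta\,4^{-2k}\le\delta\,4^{-k-m}$; and $\tilde F_m(m)\lesssim1$ as noted. The only genuinely new point relative to Lemma~\ref{le_functionals} is the displayed estimate: for $p/2\le1$ one cannot pull $|f_m|^p$ out of the inner area integral, and it is precisely the maximal-function detour with the exponents $\tfrac{2}{2-p}$ and $\tfrac{2}{p}$ that moves the smallness onto the vanishing Carleson measure $\mu_{b,\varepsilon_m}$; the endpoint $p=2$ is immediate, since then $2-p=0$, the maximal function drops out, and $L_m(m)\lesssim\int_{\Bn}|f_m|^2\,d\mu_{b,\varepsilon_m}\to0$ directly.
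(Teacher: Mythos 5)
Your proof is correct and follows essentially the same route as the paper's: the area theorem plus absolute continuity handle $F_m(k)$ and $\tilde F_m(m)$, and the identical maximal-function/H\"older step with exponents $\tfrac{2}{2-p}$ and $\tfrac{2}{p}$ reduces $L_m(m)$ to $\bigl(\int_{\Bn}|f_m|^p\,d\mu_{b,\varepsilon_m}\bigr)^{p/2}$, which vanishes by the vanishing Carleson property from Lemma \ref{le_vcarleson}. Your only addition is making explicit (via unions of compacts) how the nesting $K_{m-1}\subset K_m$ is preserved, a detail the paper leaves implicit.
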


\begin{proof}
As before, the operator $J_ b$ is bounded on $H^p$ and for any $\varepsilon>0$, there exists a compact set $K_\varepsilon \subset \mathbb{S}^n$ with $\sigma(E_\varepsilon) < \varepsilon$  such that $
\mu_{b,\varepsilon}$ is a vanishing Carleson measure. By the version of Calder\'{o}n's area theorem for the unit ball \cite{AB,Pau2016},
\begin{align*}
\int_{\mathbb{S}^n}\left(\int_{\Gamma(\xi)}|f_k|^2 |Rb|^2 dV_{1-n}\right)^{\frac{p}{2}}d\sigma(\xi)\asymp\|J_bf_k\|_{H^p}^{p}\lesssim1.
\end{align*}
In particular, $\tilde{F}_m(k)\lesssim 1$. Therefore, by absolute continuity, for all $k$, one has
\begin{equation}\label{eq: 6}
\lim_{\varepsilon\to 0} \int_{E_\varepsilon}\left(\int_{\Gamma(\xi)}|f_k|^2 |Rb|^2 dV_{1-n}\right)^{\frac{p}{2}}d\sigma(\xi)=0.
\end{equation}
Now, using H\"{o}lder's inequality, the $L^p$-boundedness of the admissible maximal function and \eqref{eq: 4}, we get
\begin{align*}
\int_{K_\varepsilon}\left(\int_{\Gamma(\xi)}|f_m|^2 |Rb|^2 dV_{1-n}\right)^{\frac{p}{2}} & d\sigma(\xi)
\\&\le \int_{K_\varepsilon}|f^*_m(\xi)|^{\frac{(2-p)p}{2}}\left(\int_{\Gamma(\xi)}|f_m|^p |Rb|^2 dV_{1-n}\right)^{\frac{p}{2}}d\sigma(\xi)\\
&\le \|f^*_m\|_{L^p(\mathbb{S}^n)}^{\frac{(2-p)p}{2}}\left(\int_{K_\varepsilon}\int_{\Gamma(\xi)}|f_m|^p |Rb|^2 dV_{1-n}d\sigma(\xi)\right)^{\frac{p}{2}}\\
&\lesssim \left(\int_{\Omega_\varepsilon} |f_m|^p d\mu_b \right)^{\frac{p}{2}}.
\end{align*}
As $\mu_{b,\varepsilon}$ is a vanishing Carleson measure,  we obtain
\begin{equation}
\label{eq: vcarleson2}
\lim_{m \to \infty}\int_{K_\varepsilon}\left(\int_{\Gamma(\xi)}|f_m|^2 |Rb|^2 dV_{1-n}\right)^{\frac{p}{2}}d\sigma(\xi)=0.
\end{equation}

As in the proof of Lemma \ref{le_functionals}, we may use \eqref{eq: 6} and \eqref{eq: vcarleson2} inductively to find a subsequence denoted still by $\{f_k\}$, a decreasing sequence $\varepsilon_m > 0, \, \varepsilon_m \to 0,$ and compact sets $K_m$ satisfying $K_m\subset K_{m+1}$ and $\sigma(E_m) < \varepsilon_m$ where $E_m = \mathbb{S}^n \setminus K_m$ such that $F_m(k) \lesssim \delta 4^{-k-m}$ for $k < m$ and $L_m(m) \lesssim \delta 4^{-2m}$ for all $m\geq1$.

More precisely, we have
\begin{align*}
\tilde{F}_m(k) = \int_{\tilde{E}_m}\left(\int_{\Gamma(\xi)} |f_k|^2 |Rb|^2 dV_{1-n}\right)^{\frac{p}{2}} d\sigma(\xi) \le F_m(k) %= \int_{E_m}\left(\int_{\Gamma(\xi)}|f_k|^2 |Rb|^2 dV_{1-n}\right)^{\frac{p}{2}} d\sigma(\xi)\\
\lesssim \delta 4^{-k-m}
\end{align*}
for $k < m$. Moreover, it holds that $\tilde{E}_m \subset K_{m+1} \subset K_k$ for $k > m$. Hence
\[
\begin{split}
\tilde{F}_m(k) \le
L_k(k)\lesssim \delta 4^{-2k}%\int_{K_k}\left(\int_{\Gamma(\xi)}|J_b f_j|^{p-2}|f_k|^2 |Rb|^2 dV_{1-n}\right) d\sigma(\xi)
< \delta 4^{-k-m}
\end{split}
\]
for $k > m$.
Therefore, it always holds that $\tilde{F}_m(k)\lesssim \delta 4^{-k-m}$ for $k\neq m$.

%%%Since we have that $\Sp_n = K_m \cup \tilde{E}_m \cup E_{m+1}$, the integrals $L_m(m)=\int_{K_m}\left(\int_{\Gamma(\xi)}|f_m|^2 |Rb|^2 dV_{1-n}\right)^{\frac{p}{2}} d\sigma(\xi)$ and $F_{m+1}(m)$ are small, so that the integral $\tilde{F}_m(m)$  has to be large, since  $\inf_{k \in \mathbb{N}} \|J_b f_k\|_{H^p} \asymp 1$ (e.g.~larger than $\frac{d^p}{2}$ for all $m$).
\end{proof}

 We now construct a bounded linear operator acting on $H^p$ in terms of the operator $J_b$ and a normalized sequence of functions converging uniformly to zero on compact subsets of $\B_n$.

\begin{lemma}
\label{bddU}
Let $1 \le p < \infty$, $b \in BMOA(\Bn)$,
and $(f_k) \subset H^p$ be such that $\|f_k\|_{H^p} = 1$ for all $k$ and $(f_k)$ converges to zero uniformly on compact subsets of $\Bn.$ Then there exists a subsequence $(f_{n_k})$ of $(f_k)$ such that the linear mapping $U\colon \ell^p \to H^p,$ defined as $$U(\alpha) = \sum_{k = 1}^\infty \alpha_k J_bf_{n_k},$$ where $\alpha = (\alpha_k) \in \ell^p$, is bounded.
\end{lemma}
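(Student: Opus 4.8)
The plan is to split the argument into the two regimes $p > 2$ and $1 \le p \le 2$, since the relevant description of $\|J_b g\|_{H^p}$ differs in the two cases. In both cases, the idea is to pass first to a subsequence furnished by Lemma \ref{le_functionals} (respectively Lemma \ref{le_functionals-bis}) with a parameter $\delta$ to be fixed later, and then to estimate $\|U(\alpha)\|_{H^p}^p = \|J_b(\sum_k \alpha_k f_{n_k})\|_{H^p}^p$ using the Calderón area-function description of the Hardy norm of a Volterra image: for $g \in H^p$, one has
\[
\|J_b g\|_{H^p}^p \asymp \int_{\Sn} \left( \int_{\Gamma(\xi)} |g|^2 |Rb|^2\, dV_{1-n} \right)^{p/2} d\sigma(\xi),
\]
together with the pointwise identity $R(J_b g) = g\, Rb$. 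For $p > 2$ I would also use the equivalent form $\|J_b g\|_{H^p}^p \asymp \int_{\Bn} |J_b g|^{p-2} |g|^2\, d\mu_b$ arising from the estimates on pages 144--145 of \cite{Pau2016}, which is exactly why the quantities $A(j,k)$, $A(\infty,k)$, $A(k,\infty)$ were introduced in Lemma \ref{le_functionals}.

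First I would treat $p > 2$. Writing $g = \sum_k \alpha_k f_{n_k}$ and expanding $|g|^2 \le (\sum_k |\alpha_k|\,|f_{n_k}|)^2$, and estimating $|J_b g|^{p-2} \lesssim \sum_j |\alpha_j|^{p-2} |J_b f_{n_j}|^{p-2}$ when $p-2 \le 1$ (and by a convexity/Hölder argument when $p - 2 > 1$, controlling $|J_b g|^{p-2}$ by a weighted sum of the $|J_b f_{n_j}|^{p-2}$ against the $\ell^1$-normalized weights $|\alpha_j|^p/\|\alpha\|_{\ell^p}^p$), one arrives at a bound of the form
\[
\|U(\alpha)\|_{H^p}^p \lesssim \|\alpha\|_{\ell^p}^{p-2} \sum_{j,k} |\alpha_j|^{?}|\alpha_k|^2\, \Sn(j,k),
\]
and then I would break $\Sn = \bigcup_m \tilde E_m$ and replace $\Sn(j,k)$ by $\sum_m \tilde E_m(j,k)$. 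The diagonal-in-$m$ terms $\tilde E_m(m,k)$, $\tilde E_m(k,m)$, $\tilde E_m(m,m)$ are controlled by the $K_m$-estimates and the $E_m(\infty,k)$, $E_m(k,\infty)$ estimates of Lemma \ref{le_functionals}, all of which carry the gain $\delta^2$; the off-diagonal terms carry the factor $4^{-j-k-m}$, which is summable in $m$ and in $(j,k)$. Summing the geometric series in $m$ and then applying Hölder in $(j,k)$ with exponents matching the powers of $|\alpha_j|$, $|\alpha_k|$, one obtains $\|U(\alpha)\|_{H^p}^p \lesssim \delta^2 \|\alpha\|_{\ell^p}^p + C\|\alpha\|_{\ell^p}^p$; in fact every term is $\lesssim \|\alpha\|_{\ell^p}^p$ (the role of $\delta$ here is cosmetic, but keeping it makes the later lower-bound lemma, Lemma \ref{bddbelU}, usable). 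The case $1 \le p \le 2$ is analogous but cleaner: here $|R(J_b g)|^2 = |g|^2 |Rb|^2 \le (\sum_k |\alpha_k| |f_{n_k}|)^2 |Rb|^2$, so by the area-function formula and the elementary inequality $(\sum_k c_k)^{p/2} \le \sum_k c_k^{p/2}$ valid for $p/2 \le 1$,
\[
\|U(\alpha)\|_{H^p}^p \lesssim \sum_k |\alpha_k|^p \int_{\Sn}\left(\int_{\Gamma(\xi)} |f_{n_k}|^2 |Rb|^2\, dV_{1-n}\right)^{p/2} d\sigma(\xi),
\]
wait — that already gives $\lesssim \sum_k |\alpha_k|^p \|J_b f_{n_k}\|_{H^p}^p \lesssim \|\alpha\|_{\ell^p}^p$ directly, so the decomposition into $\tilde F_m$ is only needed to keep track of the $\delta$-gain for the companion lower estimate; I would still record the splitting $\Sn = \bigcup_m \tilde E_m$ and bound $\tilde F_m(k)$ by $\delta 4^{-k-m}$ for $k \ne m$ and by a constant for $k = m$, so that the off-diagonal contribution is $\lesssim \delta \|\alpha\|_{\ell^p}^p$ and the diagonal one is $\lesssim \sum_m |\alpha_m|^p \lesssim \|\alpha\|_{\ell^p}^p$.

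The main obstacle I anticipate is the bookkeeping in the case $p > 2$: controlling $|J_b g|^{p-2}$ pointwise by a sum of the individual $|J_b f_{n_j}|^{p-2}$ when $p - 2 > 1$ is not immediate, and one must instead estimate $|J_b g|^{p} = |J_b g|^{2} \cdot |J_b g|^{p-2}$ carefully, or go through the admissible maximal function $(J_b g)^*$ and use $\|(J_b g)^*\|_{L^p} \lesssim \|J_b g\|_{H^p} \lesssim \|\alpha\|_{\ell^p}$ to absorb the $(p-2)$-power — this is precisely the mechanism behind the definitions of $A(\infty,k)$ and $A(k,\infty)$ and the estimates \eqref{Eq-k25}, \eqref{Eq-k50}. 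A secondary technical point is justifying the expansion of the $p$-th power of a sum and the interchange of the (countable) sum with the integral, which is handled by monotone convergence since all integrands are nonnegative, together with the a priori finiteness guaranteed by the Carleson-measure bounds used in Lemmas \ref{le_functionals} and \ref{le_functionals-bis}. Once these are in place, choosing $\delta$ small (or simply $\delta = 1$) and summing the geometric series yields the boundedness of $U$, and the subsequence $(f_{n_k})$ is exactly the one produced by the relevant preceding lemma.
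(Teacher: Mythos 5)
Your outline follows the paper's strategy (the area/Hardy--Stein representations of $\|J_b g\|_{H^p}^p$, the decomposition of $\Sp^n$ into the sets $\tilde{E}_m$, and the estimates of Lemmas \ref{le_functionals} and \ref{le_functionals-bis}), but two of the shortcuts you propose fail, and one of them you lean on. For $1\le p\le 2$ the bound is \emph{not} direct. After Minkowski's inequality in $L^2\big(\Gamma(\xi),|Rb|^2dV_{1-n}\big)$ you have
\[
\Big(\int_{\Gamma(\xi)}\Big|\sum_k\alpha_k f_k\Big|^2|Rb|^2\,dV_{1-n}\Big)^{1/2}\le \sum_k|\alpha_k|\,c_k(\xi)^{1/2},\qquad c_k(\xi)=\int_{\Gamma(\xi)}|f_k|^2|Rb|^2\,dV_{1-n},
\]
and raising this to the power $p$ does not give $\sum_k|\alpha_k|^p c_k(\xi)^{p/2}$ when $p>1$ (already $(2t)^p>2t^p$). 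The subadditivity $(\sum c_k)^{p/2}\le\sum c_k^{p/2}$ is not applicable because $\int_{\Gamma(\xi)}|\sum_k\alpha_kf_k|^2|Rb|^2\,dV_{1-n}$ is not the sum of the individual $|\alpha_k|^2c_k(\xi)$: the cross terms are exactly the enemy, and if your "direct" estimate were valid the upper $\ell^p$-bound would hold for the whole sequence with no subsequence extraction at all. So the decomposition over $\tilde{E}_m$ is essential, not cosmetic. Fortunately you also sketch the correct route, which is the paper's: on $\tilde{E}_m$ separate the $m$-th term using that $\|\cdot\|_{L^{p/2}}^{p/2}$ is subadditive, then apply the $L^2$ and $L^p$ Minkowski inequalities to the remainder to arrive at $\sum_m\big(\sum_{k\ne m}|\alpha_k|\tilde{F}_m(k)^{1/p}\big)^p\lesssim\delta\|\alpha\|_{\ell^p}^p$, the diagonal contributing $\sum_m|\alpha_m|^p\tilde{F}_m(m)\lesssim\|\alpha\|_{\ell^p}^p$.

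For $p>3$, the weighted-convexity bound for $|\sum_j\alpha_jJ_bf_j|^{p-2}$ with weights $w_j=|\alpha_j|^p/\|\alpha\|_{\ell^p}^p$ produces the factor $|\alpha_j|^{p-2}w_j^{3-p}=|\alpha_j|^{4p-2-p^2}\|\alpha\|_{\ell^p}^{p(p-3)}$, whose exponent on $|\alpha_j|$ is negative already at $p=4$ and cannot be matched against the available bounds on $I(m,j)$; this is the obstacle you flag but do not resolve. The paper's device is Minkowski's integral triangle inequality in $L^{p-2}$ of the measure $d\mu\,d\sigma$, giving
\[
\int_{\tilde{E}_m}\int_{\Gamma(\xi)}\Big|\sum_j\alpha_jJ_bf_j\Big|^{p-2}d\mu\,d\sigma\le\Big(\sum_j|\alpha_j|\,I(m,j)^{\frac{1}{p-2}}\Big)^{p-2},
\]
after which $I(m,j)\lesssim\delta^2\|\alpha\|_{\ell^p}^24^{-j-m}$ for $j\ne m$ and $I(m,m)\lesssim(|\alpha_m|+\delta\|\alpha\|_{\ell^p}4^{-m})^2$ close the argument. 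A related slip: $\tilde{E}_m(m,m)$ carries no $\delta^2$ gain and is not estimated in Lemma \ref{le_functionals}; it is only $\asymp\|J_bf_m\|_{H^p}^p\lesssim1$, and is harmless solely because it pairs with $|\alpha_m|^{p-2}\cdot|\alpha_m|^2=|\alpha_m|^p$. Your assertion that all diagonal-in-$m$ terms carry the $\delta^2$ gain is therefore incorrect, and this particular term must be handled separately.
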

\begin{proof}

We divide our proof into three cases depending on the value of $p$, namely cases $1 \le p \le 2,$ $2 < p \le 3$ and $3 < p < \infty$. This results from the use of norms, which are equivalent to the standard $H^p$ norm and the choice of a particular norm depends on the value of $p$. For $0<\delta<1$, we choose a subsequence of $\{f_k\}$ denoted still by $\{f_k\}$, a decreasing sequence $\varepsilon_m > 0, \, \varepsilon_m \to 0,$ and compact sets $K_m$ from Lemmas \ref{le_functionals} and \ref{le_functionals-bis}.
Set $\widetilde{E}_0 = \mathbb{S}^n\setminus E_1=K_1$.\\

Let us first look at the case $1 \le p \le 2.$ Set $\alpha_ 0=0$.
By the version of the area theorem for the unit ball \cite{AB,Pau2016}, we have
\begin{align*}
\|U(\alpha)\|_{H^p}^p&\asymp \int_{\mathbb{S}^n} \left( \int_{\Gamma(\zeta)}\left| \sum_{k=1}^\infty \alpha_k f_kRb\right|^2 \, dV_{1-n}\right)^{\frac{p}{2}}\, d\sigma(\zeta)\\
&= \sum_{m=0}^\infty \int_{\tilde{E}_m} \left( \int_{\Gamma(\zeta)}\left| \sum_{k=1}^\infty \alpha_k f_kRb\right|^2 \, dV_{1-n}\right)^{\frac{p}{2}}\, d\sigma(\zeta) \\
&\lesssim\sum_{m=0}^\infty \int_{\tilde{E}_m} \left(\int_{\Gamma(\zeta)} |\alpha_m f_mRb|^2 dV_{1-n}+\int_{\Gamma(\zeta)}\left| \sum_{\substack{k=1\\ k\ne m}}^\infty \alpha_k f_kRb\right|^2 \, dV_{1-n}\right)^{\frac{p}{2}}\, d\sigma(\zeta).
\end{align*}
According to the assumption $1 \le p \le 2$, then $d(F,G)=\|F-G\|_{L^{\frac{p}{2}}}^{\frac{p}{2}}$ is a metric and hence
\[
\begin{split}
\|U(\alpha)\|_{H^p}^p&\lesssim\sum_{m=1}^\infty \int_{\tilde{E}_m} \left(\int_{\Gamma(\zeta)} |\alpha_m f_mRb|^2 dV_{1-n}\right)^{\frac{p}{2}}\, d\sigma(\zeta)
\\
&\ \ \
+\sum_{m=0}^\infty \int_{\tilde{E}_m} \left(\int_{\Gamma(\zeta)}\left| \sum_{\substack{k=1\\ k\ne m}}^\infty \alpha_k f_kRb\right|^2 \, dV_{1-n}\right)^{\frac{p}{2}}\, d\sigma(\zeta).
\end{split}
\]
%%%%Using the integral form of Mimkowski's inequalities twice,
Using the triangle inequalities in $L^2$ and $L^p$ respectively, one has
\begin{align*}
\sum_{m=0}^\infty &\int_{\tilde{E}_m} \left(\int_{\Gamma(\zeta)}\Big| \sum_{\substack{k=1\\ k\ne m}}^\infty \alpha_k f_kRb\Big|^2 \, dV_{1-n}\right)^{\frac{p}{2}}\, d\sigma(\zeta) \\
&\leq\sum_{m=0}^\infty \int_{\tilde{E}_m}\left[ \sum_{\substack{k=1\\ k\ne m}}^\infty |\alpha_k|\left(\int_{\Gamma(\zeta)}| f_kRb|^2 \, dV_{1-n}\right)^{\frac{1}{2}}\, \right]^p d\sigma(\zeta) \\
&\leq \sum_{m=0}^\infty \left(\sum_{\substack{k=1\\ k\ne m}}^\infty |\alpha_k|\left[ \int_{\tilde{E}_m}\left(\int_{\Gamma(\zeta)}| f_kRb|^2 \, dV_{1-n}\right)^{\frac{p}{2}}\, d\sigma(\zeta) \right]^{\frac{1}{p}}\right)^p
\\
& =\sum_{m=0}^\infty \Big(\sum_{\substack{k=1\\ k\ne m}}^\infty |\alpha_k|\,\tilde{F}_m(k)^{1/p}\Big)^{p}.
\end{align*}
Since $K_1\subset K_k$ for any $k\geq 1$ and $\tilde{F}_ 0=K_ 1$, we have $\tilde{F}_0(k)\le L_k(k)\lesssim \delta 4^{-2k}\le \delta 4^{-k}$. Therefore, together with the estimates $\tilde{F}_m(m)\lesssim1$ and $\tilde{F}_m(k)\lesssim\delta 4^{-k-m}$ for $k\ne m$, we have
\[
\begin{split}
\|U(\alpha)\|_{H^p}^p &\lesssim \sum_{m=1}^\infty |\alpha_{m}|^p\tilde{F}_m(m)+ \sum_{m=0}^\infty \Big(\sum_{\substack{k=1\\ k\ne m}}^\infty |\alpha_k|\tilde{F}_m(k)^{1/p}\Big)^{p} \\
&\lesssim \|\alpha\|^p_{\ell^p} \left(1+\sum_{m=0}^\infty \left(\sum_{k=1}^\infty (4^{-k-m})^{\frac{1}{p}}\right)^{p}\right)\lesssim \|\alpha\|^p_{\ell^p}.
\end{split}
\]
\mbox{}
\\

Let us then consider the case $p > 2$. Since $K_1\subset K_k$ for any $k\geq 1$, we have $\tilde{E}_0(j,k)\le K_1(j,k)\lesssim \delta^2 4^{-j-k}$.
Our starting point will be the estimate (a consequence of the Hardy-Stein estimates together with \eqref{EqG})
\begin{align*}
\|U(\alpha)\|_{H^p}^p &\asymp \int_{\mathbb{S}_n}\left(\int_{\Gamma(\xi)}\left|\sum_{j=1}^\infty \alpha_j J_b f_j\right|^{p-2}\left|\sum_{k=1}^\infty \alpha_k f_k\right|^2 |Rb|^2 dV_{1-n}\right) d\sigma(\xi)\\
&= \sum_{m=0}^\infty \int_{\tilde{E}_m}\int_{\Gamma(\xi)}\left|\sum_{j=1}^\infty \alpha_j J_b f_j\right|^{p-2}d\mu \,d\sigma(\xi),
\end{align*}
where $d\mu = \left|\sum\limits_{k=1}^\infty \alpha_k f_k\right|^2 |Rb|^2 dV_{1-n} $.

Now assume first that $2 < p \le 3.$ Then
\[
\left|\sum_{j=1}^\infty \alpha_j J_b f_j\right|^{p-2} \le \sum_{j=1}^\infty |\alpha_j|^{p-2} |J_b f_j|^{p-2}.
\]
Hence
\begin{equation}\label{Eq42}
\begin{split}
\|U(\alpha)\|_{H^p}^p &\asymp \sum_{m=0}^\infty \int_{\tilde{E}_m}\int_{\Gamma(\xi)}\left|\sum_{j=1}^\infty \alpha_j J_b f_j\right|^{p-2}d\mu d\sigma(\xi)\\
&\le \sum_{m=1}^\infty |\alpha_m|^{p-2}I(m,m)+\sum_{m=0}^\infty \sum_{\substack{j = 1 \\ j \ne m}}^\infty |\alpha_j|^{p-2}I(m,j),
\end{split}
\end{equation}
where
\[
I(m,j):=\int_{\tilde{E}_m}\int_{\Gamma(\xi)}\left| J_bf_j\right|^{p-2}d\mu \, d\sigma(\xi).
\]
Applying the triangle inequality in $L^2$, one has for $m\ge 0$ and $j \ne m$
\begin{align*}
I(m,j) &= \int_{\tilde{E}_m}\int_{\Gamma(\xi)}\left| J_bf_j\right|^{p-2}\left|\sum_{k=1}^\infty \alpha_k f_k\right|^2 |Rb|^2 dV_{1-n}\,d\sigma(\xi)
\\
&\le \left(\sum_{k=1}^\infty |\alpha_k| \left(\int_{\tilde{E}_m}\int_{\Gamma(\xi)}\left| J_bf_j\right|^{p-2}|f_k|^2 |Rb|^2 dV_{1-n}\,d\sigma(\xi)\right)^{1/2} \right)^2
\\
&=\left(\sum_{k=1}^\infty |\alpha_k| \tilde{E}_ m(j,k)^{1/2} \right)^2
\\
&\lesssim \delta^2\|\alpha\|_{\ell^p}^2 \left(\sum_{k=1}^\infty  2^{-j-k-m} \right)^2=\delta^2\|\alpha\|_{\ell^p}^2 4^{-j-m},
\end{align*}
and for $m\ge1$, using again the triangle inequality in $L^2$,
\begin{align*}
I(m,m)
&=  \int_{\tilde{E}_m}\int_{\Gamma(\xi)}\left| J_bf_m\right|^{p-2}\left|\alpha_m f_m +\sum_{\substack{k=1 \\ k \ne m}}^\infty \alpha_k f_k\right|^2 |Rb|^2 dV_{1-n}d\sigma(\xi)
\\
&\le \left (|\alpha_ m| \tilde{E}_ m(m,m)^{1/2}+\sum_{\substack{k=1 \\ k \ne m}}^\infty  |\alpha_ k| \tilde{E}_ m (m,k)^{1/2}\right )^{2}.
\end{align*}
By \eqref{EqG} and the Hardy-Stein estimates, we have $\tilde{E}_ m(m,m)\lesssim \|f_ m\|_{H^p}^p\lesssim 1$. Also, by Lemma \ref{le_functionals}, $\tilde{E}_ m (m,k)\lesssim \delta^2 4^{-k-2m}$ for $k\ne m$. Hence
\[
I(m,m)\lesssim \left( |\alpha_m|+\sum_{\substack{k=1 \\ k \ne m}}^\infty \delta 2^{-k-2m}|\alpha_k|\right)^2 \lesssim \left( |\alpha_m|+\delta \|\alpha\|_{\ell^p}4^{-m}\right)^2.
\]
So, we deduce that
\begin{align}\label{eq: 8}
I(m,j)\lesssim\left\{
\begin{aligned}
&\delta^2\|\alpha\|_{\ell^p}^2 4^{-j-m}\ \ \  \text{for}\  j\ne m,\ m\ge 0\  \text{and}\  j\ge 1;\\
\\
&\left( |\alpha_m|+\delta \|\alpha\|_{\ell^p}4^{-m}\right)^2 \ \ \  \text{for}\  j=m\ge1.
\end{aligned}\right.
\end{align}
Hence, bearing in mind \eqref{Eq42} and $|\alpha_ j|\le \|\alpha\|_{\ell^p}$,  we obtain
\begin{align*}
\|U\alpha\|_{H^p}^p &\lesssim \sum_{m=1}^\infty |\alpha_m|^{p-2}I(m,m)+\sum_{m=0}^\infty \sum_{\substack{j = 1 \\ j \ne m}}^\infty |\alpha_j|^{p-2}I(m,j)
\\
&\lesssim \sum_{m=1}^\infty |\alpha_m|^{p-2}( |\alpha_m|+4^{-m}\|\alpha\|_{\ell^p})^2+ \sum_{m=0}^\infty \sum_{\substack{j = 1 \\ j \ne m}}^\infty |\alpha_j|^{p-2}\|\alpha\|_{\ell^p}^2 4^{-j-m}
\\
&\lesssim \sum_{m=1}^\infty |\alpha_m|^{p-2}( |\alpha_m|^2+4^{-2m}\|\alpha\|^2_{\ell^p})+ \|\alpha\|_{\ell^p}^p
\lesssim \|\alpha\|^p_{\ell^p}.
\end{align*}

Finally, we consider the case  $p > 3$. As before, we have
\[
\|U(\alpha)\|_{H^p}^p \asymp \sum_{m=0}^\infty \int_{\tilde{E}_m}\int_{\Gamma(\xi)}\left|\sum_{j=1}^\infty \alpha_j J_b f_j\right|^{p-2}d\mu d\sigma(\xi).
\]
As $p>3$, we can use the triangle inequality in $L^{p-2}$ in order to obtain
\begin{align*}
\|U(\alpha)\|_{H^p}^p
&\lesssim \sum_{m = 0}^\infty \left(\sum_{j = 1}^\infty |\alpha_j| \left(\int_{\widetilde{E}_m}\int_{\Gamma(\xi)}|J_bf_j|^{p-2}d\mu d\sigma(\xi)\right)^{\frac{1}{p-2}}\right)^{p-2}
\\
&=\sum_{m = 0}^\infty \left(\sum_{j = 1}^\infty |\alpha_j| I(m,j)^{\frac{1}{p-2}}\right)^{p-2}.
\end{align*}
 Observe that the estimates obtained in \eqref{eq: 8} for $I(m,j)$ are valid for all $p>2$. Hence
\begin{align*}
\|U(\alpha)\|_{H^p}^p &\lesssim \sum_{m =1}^\infty|\alpha_m|^{p-2} I(m,m)  +\sum_{m = 0}^\infty\left(\sum_{\substack{j = 1 \\ j \ne m}}^\infty |\alpha_j| I(m,j)^{\frac{1}{p-2}}\right)^{p-2}
\\
&\lesssim \sum_{m =1}^\infty|\alpha_m|^{p-2}\left( |\alpha_m|+\|\alpha\|_{\ell^p}4^{-m}\right)^2  +\|\alpha\|_{\ell^p}^p \sum_{m = 0}^\infty\left(\sum_{\substack{j = 1 \\ j \ne m}}^\infty (4^{-j-m})^{\frac{1}{p-2}}\right)^{p-2}
\\
&\lesssim\|\alpha\|_{\ell^p}^p.
\end{align*}
Hence $U$ is bounded from $\ell^p$ into $H^p$ for $1 \le p <\infty.$
\end{proof}

\begin{lemma}
\label{bddbelU}
Let $1 \le p < \infty$, $b \in BMOA(\Bn)$, %$b \in BMOA\setminus VMOA,$
and $(f_k) \subset H^p$ be such that $\|f_k\|_{H^p} = 1$ for all $k$ and $(f_k)$  converges to zero uniformly on compact subsets of $\Bn$. Assume also that $\inf_k \|J_bf_k\|_p \asymp 1.$ Then there exists a subsequence, still denoted by $(f_k)$, such that the linear mapping $$U\colon \ell^p \to H^p, \, U(\alpha) = \sum_{k = 1}^\infty \alpha_k J_bf_k,$$ where $\alpha= (\alpha_k) \in \ell^p$, is bounded below.
\end{lemma}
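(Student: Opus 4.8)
The plan is to run the blueprint of Lemma \ref{bddU} with every inequality reversed. As there, fix $0<\delta<1$ --- to be chosen small only at the very end, with a value depending on $p$, $n$, $\|b\|_{BMOA}$ and on the implied constant in $\inf_k\|J_bf_k\|_{H^p}\asymp1$, but never on $\delta$ --- pass to the subsequence and retain the compact sets $K_m$, the sets $E_m=\Sn\setminus K_m$ and $\tilde{E}_m=E_m\setminus E_{m+1}$, and the numbers $\varepsilon_m$ supplied by Lemmas \ref{le_functionals} and \ref{le_functionals-bis} for this $\delta$; recall $\Sn=\bigsqcup_{m\ge0}\tilde{E}_m$ with $\Sn\setminus\tilde{E}_m=K_m\sqcup E_{m+1}$, and put $\alpha_0=0$. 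Since Lemma \ref{bddU} applies to the same subsequence, $U$ is automatically bounded, so it suffices to prove the lower bound $\|U(\alpha)\|_{H^p}\gtrsim\|\alpha\|_{\ell^p}$; together these show $U$ is an isomorphism of $\ell^p$ onto its range. The crucial preliminary is a \emph{diagonal lower bound}: using $J_bf_m(0)=0$, $R(J_bf_m)=f_mRb$, the Calder\'on area theorem (for $1\le p\le2$) or the Hardy--Stein estimates together with \eqref{EqG} (for $p>2$), and the hypothesis $\|J_bf_m\|_{H^p}^p\asymp1$, one gets $\int_{\Sn}(\int_{\Gamma(\xi)}|f_m|^2|Rb|^2dV_{1-n})^{p/2}d\sigma\asymp1$ and $\int_{\Sn}\int_{\Gamma(\xi)}|J_bf_m|^{p-2}|f_m|^2|Rb|^2dV_{1-n}\,d\sigma\asymp1$; subtracting the contributions of $K_m$ and $E_{m+1}$, which by Lemmas \ref{le_functionals-bis} and \ref{le_functionals} are $L_m(m)+F_{m+1}(m)\lesssim\delta 4^{-2m}$ and $K_m(m,m)+E_{m+1}(m,m)\lesssim\delta^2 4^{-3m}$ respectively, yields, for $\delta$ small and uniformly in $m\ge1$, $\tilde{F}_m(m)\gtrsim1$ when $1\le p\le2$ and $\tilde{E}_m(m,m)\gtrsim1$ when $p>2$. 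This is the only place where $\inf_k\|J_bf_k\|_{H^p}\asymp1$ is used.

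For $1\le p\le2$, write $g=U(\alpha)$, so $Rg=(\sum_k\alpha_kf_k)Rb$ and, by the area theorem, $\|g\|_{H^p}^p\asymp\sum_{m\ge0}\int_{\tilde{E}_m}(\int_{\Gamma(\xi)}|\sum_k\alpha_kf_kRb|^2dV_{1-n})^{p/2}d\sigma$. On $\tilde{E}_m$ I would peel off the term $\alpha_mf_mRb$ using the reverse triangle inequality in $L^2(\Gamma(\xi),|Rb|^2dV_{1-n})$, then apply the elementary bound $(a-b)_+^p\ge2^{1-p}a^p-b^p$ for $a,b\ge0$. Summing over $m\ge1$ the main term contributes $\gtrsim\sum_m|\alpha_m|^p\tilde{F}_m(m)\gtrsim\|\alpha\|_{\ell^p}^p$, while the remainder, after the triangle inequality in $L^2(\Gamma)$ and then in $L^p(\tilde{E}_m)$, is at most $\sum_m(\sum_{k\ne m}|\alpha_k|\tilde{F}_m(k)^{1/p})^p$; since $\tilde{F}_m(k)\lesssim\delta 4^{-k-m}$ for $k\ne m$ (and $\tilde{F}_0(k)\le L_k(k)\lesssim\delta 4^{-k}$), H\"older's inequality and summation in $m$ bound this by $\lesssim\delta\|\alpha\|_{\ell^p}^p$.

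For $p>2$ I would use the Hardy--Stein/Lusin-area representation $\|g\|_{H^p}^p\asymp\sum_{m\ge0}\int_{\tilde{E}_m}\int_{\Gamma(\xi)}|\sum_j\alpha_jJ_bf_j|^{p-2}|\sum_k\alpha_kf_k|^2|Rb|^2dV_{1-n}\,d\sigma$ and, on $\tilde{E}_m$, split both sums into their $m$-th term and a remainder, $A=\sum_{j\ne m}\alpha_jJ_bf_j$, $B=\sum_{k\ne m}\alpha_kf_k$. From $|\alpha_mf_m+B|^2\ge\frac12|\alpha_mf_m|^2-|B|^2$ and the two-sided bound $c_p|\alpha_mJ_bf_m|^{p-2}-C_p|A|^{p-2}\le|\alpha_mJ_bf_m+A|^{p-2}\le C_p(|\alpha_mJ_bf_m|^{p-2}+|A|^{p-2})$ --- the latter obtained from the $(x-y)_+^{p-2}$ estimate and from subadditivity (if $2<p\le3$) or convexity (if $p>3$) of $t\mapsto t^{p-2}$ --- one derives the clean pointwise inequality
\[
\Big|\sum_j\alpha_jJ_bf_j\Big|^{p-2}\Big|\sum_k\alpha_kf_k\Big|^2\gtrsim|\alpha_m|^p|J_bf_m|^{p-2}|f_m|^2-|A|^{p-2}|\alpha_mf_m|^2-|\alpha_mJ_bf_m|^{p-2}|B|^2-|A|^{p-2}|B|^2 .
\]
Integrating over $\tilde{E}_m\times\Gamma(\xi)$ against $|Rb|^2dV_{1-n}$ and summing in $m$, the first term gives $\gtrsim\sum_m|\alpha_m|^p\tilde{E}_m(m,m)\gtrsim\|\alpha\|_{\ell^p}^p$, and the three remaining ones are handled exactly by the bookkeeping of Lemma \ref{bddU}: the triangle inequality in $L^2(\Gamma)$ applied to the $k$-sums, and the triangle inequality in $L^{p-2}$ (for $p>3$) or subadditivity of $t\mapsto t^{p-2}$ (for $2<p\le3$) applied to the $j$-sums, reduce each to the quantities $\tilde{E}_m(j,k)$ with $(j,k)\ne(m,m)$, and the bound $\tilde{E}_m(j,k)\lesssim\delta^2 4^{-j-k-m}$ from Lemma \ref{le_functionals} makes the resulting geometric sums total $\lesssim\delta^2\|\alpha\|_{\ell^p}^p$. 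In all cases we therefore reach $\|U(\alpha)\|_{H^p}^p\gtrsim(c_0-C\delta)\|\alpha\|_{\ell^p}^p$ with $c_0,C>0$ independent of $\delta$, and a sufficiently small choice of $\delta$ completes the proof.

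The main obstacle is the case $p>2$: one has to produce the displayed pointwise lower bound for $|\sum_j\alpha_jJ_bf_j|^{p-2}|\sum_k\alpha_kf_k|^2$ --- which is the product of two ``diagonal minus error'' quantities, so the signs must be tracked carefully (multiplying the two-sided bounds for $|\sum_j\alpha_jJ_bf_j|^{p-2}$ by the nonnegative, respectively nonpositive, pieces of $\frac12|\alpha_mf_m|^2-|B|^2$) --- and then verify that each of the three error terms genuinely carries the factor $\delta^2$ rather than merely $O(1)$, so that it can be absorbed into the main term. This last point is exactly what the off-diagonal decay of $\tilde{E}_m(j,k)$ (and of $\tilde{F}_m(k)$) in Lemmas \ref{le_functionals} and \ref{le_functionals-bis} is designed to provide.
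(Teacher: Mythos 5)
Your proposal is correct and follows essentially the same route as the paper: decompose $\Sn$ into the sets $\tilde E_m$, isolate the diagonal terms $|\alpha_m|^p\tilde E_m(m,m)$ (resp.\ $|\alpha_m|^p\tilde F_m(m)$), bound them below using $\inf_k\|J_bf_k\|_{H^p}>0$ together with the smallness of $K_m(m,m)+E_{m+1}(m,m)$ (resp.\ $L_m(m)+F_{m+1}(m)$), and absorb the off-diagonal errors, which carry a factor $\delta^2$ (resp.\ $\delta$), by taking $\delta$ small. The only differences are cosmetic: you merge the two splittings into a single pointwise inequality for $p>2$ and use $(a-b)_+^p\ge 2^{1-p}a^p-b^p$ in place of the paper's nested $L^p$/$\ell^p$ reverse triangle inequalities for $1\le p\le 2$, and you usefully make explicit the subtraction argument behind the paper's assertions $\tilde E_m(m,m)\asymp\|J_bf_m\|_{H^p}^p$ and $\tilde F_m(m)\asymp\|J_bf_m\|_{H^p}^p$.
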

\begin{proof} The proof is also divided into three cases depending on the value of $p$, namely cases $1 \le p \le 2,$ $2 < p \le 3$ and $3 < p < \infty$.
For $0<\delta<1$, which will be determined later, we choose a subsequence of $\{f_k\}$ denoted still by $\{f_k\}$, a decreasing sequence $\varepsilon_m > 0, \, \varepsilon_m \to 0,$ and compact sets $K_m$ from Lemmas \ref{le_functionals} and \ref{le_functionals-bis}. We proceed to show that $U$ is bounded below.

We consider first the case $p>2$. As done before, we have
\begin{align*}
\|U(\alpha)\|_{H^p}^p &\asymp \int_{\mathbb{S}_n}\left(\int_{\Gamma(\xi)}\left|\sum_{j=1}^\infty \alpha_j J_b f_j\right|^{p-2}\left|\sum_{k=1}^\infty \alpha_k f_k\right|^2 |Rb|^2 dV_{1-n}\right) d\sigma(\xi)\\
&\geq \sum_{m=1}^\infty \int_{\tilde{E}_m}\int_{\Gamma(\xi)}\left|\sum_{j=1}^\infty \alpha_j J_b f_j\right|^{p-2}d\mu d\sigma(\xi),
\end{align*}
where $d\mu = \left|\sum\limits_{k=1}^\infty \alpha_k f_k\right|^2 |Rb|^2 dV_{1-n} $.

For the case $p > 3$, using  the standard estimate $(a+b)^q \le 2^{q-1}(a^q+b^q),$ where  $a, b \ge 0$ and $q \ge 1 $, we obtain
\begin{align*}
\|U(\alpha)\|_{H^p}^p &\gtrsim \sum_{m=1}^\infty \int_{\tilde{E}_m}\int_{\Gamma(\xi)}\Big| \alpha_m J_bf_m+\sum_{\substack{j = 1 \\ j \ne m}}^\infty \alpha_j J_bf_j\Big|^{p-2}d\mu d\sigma(\xi)
\\
&\geq\sum_{m=1}^\infty \int_{\tilde{E}_m} \int_{\Gamma(\xi)}\left (2^{3-p}\Big|\alpha_m J_bf_m\Big|^{p-2}-\Big|\sum_{\substack{j = 1 \\ j \ne m}}^\infty \alpha_j J_bf_j\Big|^{p-2}\right )d\mu \,d\sigma(\xi)
\\
&\geq  2^{3-p}\sum_{m=1}^\infty |\alpha_m|^{p-2} I(m,m) -\sum_{m =1}^\infty\Big(\sum_{\substack{j = 1 \\ j \ne m}}^\infty |\alpha_j| I(m,j)^{\frac{1}{p-2}}\Big)^{p-2}.
\end{align*}
The last inequality is a consequence of the triangle inequality in $L^{p-2}$ (see the proof of the case $p>3$ in Lemma \ref{bddU}). Also, the quantities $I(m,j)$ are the ones appearing in Lemma \ref{bddU}. By the estimates in \eqref{eq: 8}, we have
\[
\sum_{m =1}^\infty\Big(\sum_{\substack{j = 1 \\ j \ne m}}^\infty |\alpha_j| I(m,j)^{\frac{1}{p-2}}\Big)^{p-2}\le C_ 1 \delta^2 \|\alpha\|^p_{\ell^p}
\]
for some positive constant $C_ 1$. Hence,
\begin{equation}\label{Eq-25}
\|U(\alpha)\|_{H^p}^p \gtrsim 2^{3-p}\sum_{m=1}^\infty |\alpha_m|^{p-2} I(m,m) -C_ 1 \delta^2 \|\alpha\|^p_{\ell^p}.
\end{equation}
Now, the trivial estimate (a consequence of the inequality $|A-B|^2 \le 2 (|A|^2+|B|^2)$)
\[
\big | \sum_{k=1}^{\infty} \alpha _ k \,f_ k \big |^2 \ge \frac{1}{2} \,|\alpha_ m \,f_ m |^2 -\big | \sum_{k\ne m} \alpha_ k \,f_ k |^2,
\]
gives
\[
\begin{split}
I(m,m) &
= \int_{\tilde{E}_m}\int_{\Gamma(\xi)}\left|J_bf_m\right|^{p-2}|\sum_{k = 1}^\infty\alpha_k f_k|^2 |Rb|^2 dV_{1-n}\,d\sigma(\xi)
\\
& \ge \frac{1}{2}\,|\alpha_ m|^2\, \tilde{E}_ m (m,m)-B(m),
\end{split}
\]
with
\[
B(m):=\int_{\tilde{E}_m}\int_{\Gamma(\zeta)}\left|J_bf_m\right|^{p-2}\big |\sum_{\substack{k = 1 \\ k \ne m}}^\infty\alpha_k f_k \big |^2 |Rb|^2 dV_{1-n}\,d\sigma(\zeta).
\]
As  before, we use the triangle inequality in $L^2$ and the estimates $\tilde{E}_m(m,k)\lesssim \delta ^2 4^{-k-2m}$ for $k\ne m$ to obtain
\[
B(m)\leq \Big (\sum\limits_{\substack{k = 1 \\ k \ne m}}^\infty |\alpha_ k|\,\tilde{E}_m(m,k)^{\frac{1}{2}}\Big )^{2}
\leq C_ 2 \delta^2 \, \|\alpha\|^2_{\ell^p}\,4^{-2m}
\]
for some positive constant $C_ 2$. As $\tilde{E}_m(m,m) \asymp \|J_ b f_ m \|_{H^p}^p \ge d^p$, where $d=\inf_{k \in \mathbb{N}} \|J_b f_k\|_{H^p}$, we have that
\[
I(m,m)\ge \frac{C_ 3}{2}\, |\alpha_ m|^2 \,d^p -C_ 2 \delta^2 \, \|\alpha\|^2_{\ell^p}\,4^{-2m},
\]
where $C_ 3$ is another positive constant. Putting the previous estimates in \eqref{Eq-25}, we obtain
\[
\begin{split}
\|U(\alpha)\|_{H^p}^p & \gtrsim 2^{2-p}C_ 3d^p\sum_{m=1}^\infty |\alpha_m|^{p} -C_ 2 2^{3-p}\delta^2 \, \|\alpha\|^2_{\ell^p}\,\sum_{m=1}^{\infty} |\alpha_ m|^{p-2}\,4^{-2m} -C_ 1 \delta^2 \|\alpha\|^p_{\ell^p}
\\
&\ge 2^{2-p}C_ 3d^p\|\alpha\|^p_{\ell^p} -C_ 2 2^{3-p}\delta^2 \, \|\alpha\|^p_{\ell^p} -C_ 1 \delta^2 \|\alpha\|^p_{\ell^p}
\\
& \ge 2^{1-p}C_ 3d^p\|\alpha\|^p_{\ell^p}
\end{split}
\]
after taking  $\delta>0$  small enough. Hence $U$ is bounded below.\\

Let now $2 < p \le 3.$  The preceding estimates together with \eqref{eq: 8} imply that
\begin{align*}
\|U\alpha\|_{H^p}^p &\gtrsim \sum_{m=1}^\infty \int_{\tilde{E}_m}\int_{\Gamma(\xi)}\left|\sum_{j=1}^\infty \alpha_j J_b f_j\right|^{p-2}d\mu d\sigma(\xi)
\\
&\ge \sum_{m=1}^\infty |\alpha_m|^{p-2} I(m,m) -\sum_{m =1}^\infty\sum_{\substack{j = 1 \\ j \ne m}}^\infty |\alpha_j|^{p-2} I(m,j)
\\
&\ge\sum_{m=1}^\infty\left(\frac{C_ 3\,d^p}{2}|\alpha_m|^p-C_ 2 \delta^2 \|\alpha\|_{\ell^p}^p 4^{-2m}\right)-C_ 4\delta^2\|\alpha\|_{\ell^p}^p\sum_{m =1}^\infty\sum_{\substack{j = 1 \\ j \ne m}}^{\infty} 4^{-j-m}
\\
&\gtrsim d^p\,\|\alpha\|_{\ell^p}^p,
\end{align*}
whenever $\delta$ is small enough.\\

 Finally, it remains to deal with the case $1 \le p \le 2.$  By the area theorem, we have
\[
\begin{split}
\|U\alpha\|_{H^p}^p &\asymp  \int_{\mathbb S_n} \left( \int_{\Gamma(\xi)}\left| \sum_{k = 1}^\infty \alpha_k  f_k  \right|^2 |Rb|^2\,dV_{1-n}\right)^{\frac{p}{2}}\, d\sigma(\xi)
\\
&\ge \sum _{m=1}^{\infty} \int_{\tilde{E}_ m} \left( \int_{\Gamma(\xi)}\left| \sum_{k = 1}^\infty \alpha_k  f_k  \right|^2 |Rb|^2\,dV_{1-n}\right)^{\frac{p}{2}}\, d\sigma(\xi).
\end{split}
\]
Applying the $L^2$ triangle inequality, we get
\begin{align*}
&\left( \int_{\Gamma(\xi)}\left| \sum_{k = 1}^\infty \alpha_k  f_k  \right|^2 |Rb|^2\,dV_{1-n}\right)^{\frac{p}{2}}\\
\geq &\left|\left(\int_{\Gamma(\xi)}|\alpha_m f_m|^2 |Rb|^2 dV_{1-n}\right)^{1/2}-\left(\int_{\Gamma(\xi)}\Big|\sum_{\substack{k =1 \\ k \ne m}}^{\infty} \alpha_k f_k\Big|^2 |Rb|^2 dV_{1-n}\right)^{1/2}\right|^p .
\end{align*}
Then, use the $L^p$ triangle inequality to obtain
\begin{align*}
\int_{\tilde{E}_ m}& \left( \int_{\Gamma(\xi)}\left| \sum_{k = 1}^\infty \alpha_k  f_k  \right|^2 |Rb|^2\,dV_{1-n}\right)^{\frac{p}{2}}\, d\sigma(\xi)\\
\geq &\left|\alpha_m \tilde{F}_m(m)^{1/p}-\left(\int_{\tilde{E}_ m}\left(\int_{\Gamma(\xi)}\Big|\sum_{\substack{k =1 \\ k \ne m}}^{\infty} \alpha_k f_k\Big|^2 |Rb|^2 dV_{1-n}\right)^{p/2}d\sigma(\xi)\right)^{1/p}\right|^p .
\end{align*}
This, together with the $\ell^p$ triangle inequality, gives
\[
\begin{split}
\|U\alpha\|_{H^p}^p & \gtrsim \sum_{m=1}^{\infty}\left|\alpha_m \tilde{F}_m(m)^{1/p}-A(m)^{1/p}\right|^p
\\
& \ge \left| \left( \sum_{m=1}^\infty|\alpha_m|^p \tilde{F}_m(m)\right)^{1/p}
-\left( \sum_{m=1}^\infty  A(m)\right)^{1/p}\right|^p,
\end{split}
\]
where
\[
A(m):=\int_{\tilde{E}_ m}\left(\int_{\Gamma(\xi)}\Big |\sum_{\substack{k =1 \\ k \ne m}}^{\infty} \alpha_k f_k\Big|^2 |Rb|^2 dV_{1-n}\right)^{p/2}d\sigma(\xi).
\]
By Lemma \ref{le_functionals-bis} we have $\tilde{F}_ m(k)\le C_ 5 \delta 4^{-k-m}$ for $k\ne m$. Thus, by the estimates obtained in the proof of the case $1\le p\le 2$ in Lemma \ref{bddU}, we have
\[
\begin{split}
\sum_{m=1}^{\infty} A(m) & \le \sum_{m=1}^\infty \Big(\sum_{\substack{k=1\\ k\ne m}}^\infty |\alpha_k|\,\tilde{F}_m(k)^{1/p}\Big)^{p}
\\
& \le C_ 5 \delta  \sum_{m=1}^\infty \left(\sum_{k =1}^\infty ( 4^{-k-m})^{1/p}\right)^p \|\alpha\|_{\ell^p} \le C_ 6 \delta\,\|\alpha \|^p_{\ell^p}.
\end{split}
\]
By the area theorem, we have $\tilde{F}_ m(m)\asymp \|J_ b f_ m\|_{H^p}^p$. Hence, there is a positive constant $C_ 7$ such that $\tilde{F}_ m(m)\ge C_ 7 d^p$, where $d=\inf_{k \in \mathbb{N}} \|J_b f_k\|_{H^p}$.
Therefore we obtain the desired lower bound
\begin{align*}
\|U\alpha\|_{H^p} &\gtrsim  C_ 7^{1/p} d\,\|\alpha\|_{\ell^p} - \big (C_ 6 \delta \big )^{1/p} \|\alpha\|_{\ell^p}
\gtrsim  d \,\|\alpha\|_{\ell^p},
\end{align*}
whenever $\delta$ is small enough.
\end{proof}

Now we are ready to prove our second main result.

\begin{proof}[\textbf{Proof of Theorem \ref{mt2}}]
We observe by the boundedness of point evaluations on $H^p$ that a normalized basis $(g_n)$ of $M$ is locally bounded (consider e.g.~closed balls $B(0, 1-1/n), n = 1, 2,\ldots,$ and recall that $$|f(z)| \lesssim \frac{\|f\|_{H^p}}{(1-|z|^2)^{(n+1)/p}}.)$$ Then by Montel's theorem $(g_n)$ is a normal family and hence there is a subsequence $(g_{n_k})$ converging  to some holomorphic function $g$ uniformly on compact subsets of $\B^n$. Also, $g \in H^p$, by the uniform convergence on $r\mathbb{S}^n$ for every $0 < r < 1$ and the definition of $H^p$-norm. Now take $f_k = \frac{g_{n_k}-g}{\|g_{n_k}-g\|_{H^p}}$ to obtain a normalized sequence $(f_k)$ converging to zero uniformly on compact subsets of $\B^n$. Now by Lemmas \ref{bddU} and \ref{bddbelU}, we can find a subsequence $(f_{n_k})$ such that the operator $U\colon \ell^p \to H^p$ given by \, $$U\alpha = \sum_{k=1}^\infty \alpha_k J_b f_{n_k}$$ is an isomorphism onto its range $J_bN$, where $N = \overline{\textup{span}\{f_{n_k}}\}$. Then the subspace $N$ is isomorphic to $\ell^p$ and in particular the operator $J_b$ is $\ell^2$-singular for $p \ne 2$.
\end{proof}

\begin{remark}

(1) Due to the fact that the sequence spaces $\ell^p$ and $\ell^q$ are totally incomparable for $p\ne q$ whenever $1\le p,q<\infty$, Theorem 1.2 also implies that $J_b\in S_{q}(H^p)$ for $1\le p,q<\infty$, $p \ne q$, and $b\in BMOA(\Bn)$.

(2)  It is known that the standard Bergman spaces $A_{\alpha}^{p}$, $\alpha>-1$, are isomorphic to  $\ell^p$. If $1\le p<\infty$, the strict singularity of the operator $J_b$ on $A_{\alpha}^{p}$ coincides with the compactness, since all strictly singular operators on $\ell^p$  are compact. In particular, we have $J_b\in S_{q}(A_{\alpha}^{p})$ for $1\le p,q<\infty$, $p \ne q$, and $b$ being in the Bloch space.
\end{remark}

\end{document}